\newtheorem{theorem}{Theorem}[section]
\newtheorem{corollary}{Corollary}
\newtheorem{lemma}[theorem]{Lemma}
\theoremstyle{definition}
\newtheorem{definition}[theorem]{Definition}
\newtheorem{remark}{Remark}
\def\a{{\alpha(t)}}
\def\t{\tau}
\def\LHI{{_a\mathcal{I}_t^{\a}}}
\def\RHI{{_t\mathcal{I}_b^{\a}}}
\def\LHD{{_a\mathcal{D}_t^{\a}}}
\def\RHD{{_t\mathcal{D}_b^{\a}}}
\def\LHMD{{_a\mathbb{D}_t^{\a}}}
\def\RHMD{{_t\mathbb{D}_b^{\a}}}
\def\C{\left(^{-\a}_{\quad k}\right)}
\def\D{\left(^{\,\,-\a}_{k-n-1}\right)}
\def\DS{\displaystyle}
\title[Computing Hadamard type operators]{Computing Hadamard type operators\\ 
of variable fractional order\footnote{\normalfont\scshape\lowercase{This is 
a preprint of a paper whose final and definite form will be published in 
Applied Mathematics and Computation, ISSN: 0096-3003. 
Submitted 10/June/2014; revised 03/Dec/2014; accepted 16/Dec/2014.}}}
\author[R. Almeida and D. F. M. Torres]{}
\subjclass[2010]{Primary: 26A33, 33F05; Secondary: 34A08, 49M99.}
\keywords{Fractional calculus, variable fractional order, numerical methods,
fractional differential equations, fractional calculus of variations}
\email{ricardo.almeida@ua.pt}
\email{delfim@ua.pt}
\begin{document}

\maketitle


\centerline{\scshape Ricardo Almeida and Delfim F. M. Torres}
\medskip
{\footnotesize
 \centerline{Center for Research and Development in Mathematics and Applications (CIDMA)}
   \centerline{Department of Mathematics, University of Aveiro}
   \centerline{3810--193 Aveiro, Portugal}
}

\bigskip


\begin{abstract}
We consider Hadamard fractional derivatives and integrals of variable fractional order.
A new type of fractional operator, which we call the Hadamard--Marchaud fractional derivative,
is also considered. The objective is to represent these operators as series of terms
involving integer-order derivatives only, and then approximate the fractional operators
by a finite sum. An upper bound formula for the error is provided. We exemplify our method
by applying the proposed numerical procedure to the solution of a fractional differential equation
and a fractional variational problem with dependence on the Hadamard--Marchaud fractional derivative.
\end{abstract}


\section{Introduction}

Fractional calculus is a branch of mathematical analysis that studies the possibility
of taking real number powers or complex number powers of the differentiation
and integration operators \cite{Samko:book,Ref2:1}.
It has been called ``The calculus of the XXI century''
(K. Nishimoto, 1989) and claimed that ``Nature works with fractional time derivatives''
(S. Westerlund, 1991) \cite{Uchaikin}. Several definitions for fractional derivatives
and fractional integrals are found in the literature. Although the most common ones seem to be
the Riemann--Liouville and Caputo fractional operators, recently there has been an increasing interest
in the development of Hadamard's XIX century fractional calculus \cite{Hadamard}: see
\cite{MyID:290,MyID:292,MR1907120,Butzer,Butzer2,Jarad,Katugampola,Kilbas2,Kilbas,Kilbas3} and references therein.
This calculus is due to the French mathematician Jacques Hadamard (1865--1963),
where instead of power functions, as in Riemann--Liouville and Caputo fractional calculi,
one has logarithm functions. The left and right Hadamard fractional integrals
of order $\alpha>0$ are defined by
$$
{_a\mathcal{I}_t^\alpha}x(t)=\frac{1}{\Gamma(\alpha)}
\int_a^t\left(\ln\frac{t}{\tau}\right)^{\alpha-1}\frac{x(\tau)}{\tau}d\tau
$$
and
$$
{_t\mathcal{I}_b^\alpha}x(t)=\frac{1}{\Gamma(\alpha)}
\int_t^b \left(\ln\frac{\tau}{t}\right)^{\alpha-1}\frac{x(\tau)}{\tau}d\tau,
$$
respectively, while the left and right Hadamard fractional derivatives
of order $\alpha\in(0,1)$ are given by
\begin{equation}
\label{classicalHD}
{_a\mathcal{D}_t^\alpha}x(t)=\frac{t}{\Gamma(1-\alpha)}
\frac{d}{dt}\int_a^t \left(\ln\frac{t}{\tau}\right)^{-\alpha}\frac{x(\tau)}{\tau}d\tau
\end{equation}
and
$$
{_t\mathcal{D}_b^\alpha}x(t)=\frac{-t}{\Gamma(1-\alpha)}\frac{d}{dt}
\int_t^b \left(\ln\frac{\tau}{t}\right)^{-\alpha}\frac{x(\tau)}{\tau}d\tau,
$$
respectively. Uniqueness and continuous dependence of solutions for
nonlinear fractional differential systems with Hadamard
derivatives is discussed in \cite{MR3215482}.
The main purpose of this paper is to extend the previous definitions to the case where
the order $\alpha$ of the integrals and of the derivatives is not a constant, but a function
that depends on time. Such time-dependence of $\alpha$ has already been considered
for Riemann--Liouville and Caputo fractional operators, and has proven to describe better certain phenomena
(see, e.g., \cite{Coimbra,Coimbra2,Diaz,Lorenzo,Od2,Ramirez2,Samko,SamkoRoss}).
To the best of our knowledge, an extension to Hadamard fractional operators is new
and no work has been carried out so far in this direction. This is due to practical difficulties
in computing such fractional derivatives and integrals of variable order. For this reason, here we propose a simple
but effective numerical method that allows to deal with variable fractional order operators of Hadamard type.

The organization of the paper is the following. In Section~\ref{sec:def} we extend
known definitions of Hadamard fractional operators by considering the order $\alpha$ to be
a function, and present a new definition of derivative, the Hadamard--Marchaud fractional derivative,
which is an intrinsic variable order operator.
In Section~\ref{sec:approx} we prove expansion formulas for the given fractional operators,
using only integer-order derivatives. Finally, in Section~\ref{sec:example}
we give some concrete examples of the usefulness of the proposed method,
including the application to the solution of a fractional differential system
of variable order (Section~\ref{sec:application}) and to the solution of
a fractional variational problem of variable order (Section~\ref{sec:appl:fcv}).


\section{Hadamard operators of variable fractional order}
\label{sec:def}

Along the text, the order of fractional operators is given by a function $\alpha\in C^1([a,b],(0,1))$,
and the space of functions $x:[a,b]\to\mathbb{R}$ is such that each of the following integrals
are well-defined, where $a,b$ are two reals with $0<a<b$.

\begin{definition}[Hadamard integrals of variable fractional order]
\label{def:H:i}
The left and right Hadamard fractional integrals of order $\a$ are defined by
$$
\LHI x(t)=\frac{1}{\Gamma(\a)}
\int_a^t\left(\ln\frac{t}{\tau}\right)^{\a-1}\frac{x(\tau)}{\tau}d\tau
$$
and
$$
\RHI x(t)=\frac{1}{\Gamma(\a)}\int_t^b
\left(\ln\frac{\tau}{t}\right)^{\a-1}\frac{x(\tau)}{\tau}d\tau,
$$
respectively.
\end{definition}

\begin{definition}[Hadamard derivatives of variable fractional order]
\label{def:H:d}
The left and right Hadamard fractional derivatives of order $\a$ are defined by
$$
\LHD x(t)=\frac{t}{\Gamma(1-\a)}\frac{d}{dt}
\int_a^t \left(\ln\frac{t}{\tau}\right)^{-\a}\frac{x(\tau)}{\tau}d\tau
$$
and
$$
\RHD x(t)=\frac{-t}{\Gamma(1-\a)}\frac{d}{dt}
\int_t^b \left(\ln\frac{\tau}{t}\right)^{-\a}\frac{x(\tau)}{\tau}d\tau,
$$
respectively.
\end{definition}

The two Definitions~\ref{def:H:i} and \ref{def:H:d}
coincide with the classical definitions of Hadamard
when the order $\a$ is a constant function. Besides these definitions,
we introduce a different one inspired on Hadamard and Marchaud
fractional derivatives \cite{Samko:book}.

\begin{definition}[Hadamard--Marchaud derivatives of variable fractional order]
\label{def:H:M}
The left and right Hadamard--Marchaud fractional derivatives of order $\a$ are defined by
\begin{multline}
\label{eq:left:H:d}
\LHMD x(t)=\frac{x(t)}{\Gamma(1-\a)}\left(\ln\frac{t}{a}\right)^{-\a}\\
+\frac{\a}{\Gamma(1-\a)}\int_a^t\frac{x(t)-x(\tau)}{\tau}\left(\ln\frac{t}{\tau}\right)^{-\a-1}d\tau
\end{multline}
and
\begin{multline}
\label{eq:right:H:d}
\RHMD x(t)=\frac{x(t)}{\Gamma(1-\a)}\left(\ln\frac{b}{t}\right)^{-\a}\\
+\frac{\a}{\Gamma(1-\a)}\int_t^b\frac{x(t)-x(\tau)}{\tau}\left(\ln\frac{\tau}{t}\right)^{-\a-1}d\tau,
\end{multline}
respectively.
\end{definition}

\begin{remark}
Splitting the integrals in Definition~\ref{def:H:M} into two,
and integrating by parts, we get that \eqref{eq:left:H:d} is
equivalent to
\begin{equation}
\label{eq:marchaud}
\begin{split}
\LHMD  x(t)&=- \DS\frac{\a}{\Gamma(1-\a)}\int_a^t x(\t)
\cdot \frac{1}{\t}\left(\ln\frac{t}{\t}\right)^{-\alpha(t)-1}d\t\\
&=\DS\frac{x(a)}{\Gamma(1-\a)}\left(\ln\frac{t}{a}\right)^{-\a}
+\frac{1}{\Gamma(1-\a)}\int_a^t \left(\ln\frac{t}{\t}\right)^{-\alpha(t)}x'(\t)d\t
\end{split}
\end{equation}
while \eqref{eq:right:H:d} is equivalent to
$$
\RHMD x(t) =\frac{x(b)}{\Gamma(1-\a)}\left(\ln\frac{b}{t}\right)^{-\a}
-\frac{1}{\Gamma(1-\a)}\int_t^b \left(\ln\frac{\t}{t}\right)^{-\alpha(t)}x'(\t)d\t.
$$
\end{remark}

Other Hadamard notions of variable fractional order are possible.
For example, motivated by the Caputo fractional derivative, we can set
\begin{equation}
\label{caputoVarorder}
{_a^C\mathcal{D}_t^{\a}}x(t):={_a\mathcal{D}_t^{\a}}(x(t)-x(a)).
\end{equation}
Integrating by parts, we then obtain that
\begin{equation*}
\begin{split}
{{_a^C\mathcal{D}_t^{\a}}}x(t) &=\DS\frac{t}{\Gamma(1-\a)}\frac{d}{dt}
\int_a^t \left(\ln\frac{t}{\tau}\right)^{-\a}\frac{1}{\tau}(x(\tau)-x(a))d\tau\\
&=\DS  \frac{t}{\Gamma(1-\a)}\frac{d}{dt}\left[\frac{1}{1-\a}
\int_a^t \left(\ln\frac{t}{\tau}\right)^{1-\a}x'(\tau)d\tau\right]\\
&=\DS  \frac{t\alpha'(t)}{\Gamma(2-\a)}
\int_a^t \left(\ln\frac{t}{\tau}\right)^{1-\a}x'(\tau)\left[\frac{1}{1-\a}
-\ln\left(\ln\frac{t}{\tau}\right)\right]d\tau\\
&\quad \DS +\frac{1}{\Gamma(1-\a)}
\int_a^t \left(\ln\frac{t}{\tau}\right)^{-\a}x'(\tau)d\tau.
\end{split}
\end{equation*}
Similar calculations can be done for the right Hadamard--Caputo
derivative of variable fractional order,
${_t^C\mathcal{D}_b^{\a}}x(t):={_t\mathcal{D}_b^{\a}}(x(t)-x(a))$.

\begin{definition}[Left Hadamard--Caputo derivative of variable fractional order]
\label{def:H:C}
The left Hadamard--Caputo fractional derivatives of order $\a$  is defined by
\begin{multline*}
{{_a^C\mathcal{D}_t^{\a}}}x(t)
= \DS  \frac{t\alpha'(t)}{\Gamma(2-\a)}
\int_a^t \left(\ln\frac{t}{\tau}\right)^{1-\a}x'(\tau)\left[\frac{1}{1-\a}
-\ln\left(\ln\frac{t}{\tau}\right)\right]d\tau\\
\DS +\frac{1}{\Gamma(1-\a)}
\int_a^t \left(\ln\frac{t}{\tau}\right)^{-\a}x'(\tau)d\tau.
\end{multline*}
\end{definition}

\begin{remark}
If we consider the particular case $\a \equiv \alpha$, $\alpha$ a constant,
then Definition~\ref{def:H:C} simplifies to the Hadamard--Caputo fractional derivative
studied in \cite{Jarad}:
\begin{equation*}
{_a^C\mathcal{D}_t^{\alpha}}x(t)=\frac{1}{\Gamma(1-\alpha)}
\int_a^t \left(\ln\frac{t}{\tau}\right)^{-\alpha}x'(\tau)d\tau.
\end{equation*}
\end{remark}

\begin{lemma}
\label{DIofLn}
Let $\beta>-1$. If
$$
x(t)=\left(\ln\frac{t}{a}\right)^\beta,
$$
then the left Hadamard fractional integral of order $\a$ is given by
$$
\LHI x(t)=\frac{\Gamma(\beta+1)}{\Gamma(\beta
+\a+1)}\left(\ln\frac{t}{a}\right)^{\beta+\a},
$$
the left Hadamard fractional derivative of order $\a$ is given by
\begin{multline*}
\LHD x(t)=\frac{\Gamma(\beta+1)}{\Gamma(\beta
-\a+1)}\left(\ln\frac{t}{a}\right)^{\beta-\a}\\
-\frac{t \alpha'(t) \Gamma(\beta+1)}{\Gamma(\beta
-\a+2)}\left(\ln\frac{t}{a}\right)^{\beta-\a+1}
\left[\ln\left(\ln\frac{t}{a}\right)
+\psi(1-\a)-\psi(\beta-\a+2)\right],
\end{multline*}
where $\psi$ is the Psi function, that is, $\psi$ is
the derivative of the logarithm of the Gamma function,
$$
\psi(t)=\frac{d}{dt} \ln\left({\Gamma(t)}\right)
= \frac{\Gamma'(t)}{\Gamma(t)},
$$
and the left Hadamard--Marchaud fractional
derivative of order $\a$ is given by
$$
\LHMD x(t)=\frac{\Gamma(\beta+1)}{\Gamma(\beta
-\a+1)}\left(\ln\frac{t}{a}\right)^{\beta-\a}.
$$
\end{lemma}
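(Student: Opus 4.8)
The plan is to compute each of the three operators directly on the test function $x(t)=\left(\ln\frac{t}{a}\right)^\beta$, reducing every integral to a Beta-function evaluation. The key change of variables throughout is $u=\ln(\tau/a)/\ln(t/a)$, which maps $\tau\in[a,t]$ to $u\in[0,1]$ and satisfies $d\tau/\tau=\ln(t/a)\,du$, while $\ln(t/\tau)=\ln(t/a)(1-u)$. Under this substitution, each integrand collapses to a constant power of $\ln(t/a)$ times $u^{p}(1-u)^{q}$, so the integral becomes $B(p+1,q+1)=\Gamma(p+1)\Gamma(q+1)/\Gamma(p+q+2)$.

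First I would handle the integral $\LHI x(t)$. Plugging in and substituting, the integral separates as a power of $\ln(t/a)$ times $\int_0^1 u^\beta(1-u)^{\a-1}\,du=B(\beta+1,\a)$; combining with the $1/\Gamma(\a)$ prefactor and simplifying the Gamma functions gives the stated closed form $\frac{\Gamma(\beta+1)}{\Gamma(\beta+\a+1)}\left(\ln\frac{t}{a}\right)^{\beta+\a}$. Next I would treat the Hadamard--Marchaud derivative, which is the cleanest: using the equivalent form \eqref{eq:marchaud} with $x'(\tau)=\frac{\beta}{\tau}\left(\ln\frac{\tau}{a}\right)^{\beta-1}$, the boundary term vanishes when $\beta>0$ (and the $\beta=0$ case gives a constant handled directly), and the same substitution turns the integral into $B(\beta,1-\a)$, yielding $\frac{\Gamma(\beta+1)}{\Gamma(\beta-\a+1)}\left(\ln\frac{t}{a}\right)^{\beta-\a}$ after simplification.

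The Hadamard derivative $\LHD x(t)$ is the most involved and is where the main obstacle lies. Here the operator applies $\frac{t}{\Gamma(1-\a)}\frac{d}{dt}$ to an integral, and since $\a=\alpha(t)$ depends on $t$, differentiating under the integral sign produces not only the expected term (mirroring the constant-order Riemann--Liouville computation) but also an extra contribution from $\frac{d}{dt}$ acting on the exponent $-\a$ and on the $1/\Gamma(1-\a)$ prefactor. The plan is first to evaluate the inner integral $\int_a^t(\ln\frac{t}{\tau})^{-\a}\frac{x(\tau)}{\tau}\,d\tau$ in closed form as a power of $\ln(t/a)$ times the Beta-type constant $\frac{\Gamma(\beta+1)\Gamma(1-\a)}{\Gamma(\beta-\a+2)}$, and only then differentiate.

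The hard part will be carefully tracking the $t$-derivative through the $\a$-dependence. Writing the inner integral as $G(t)=\frac{\Gamma(\beta+1)\Gamma(1-\a)}{\Gamma(\beta-\a+2)}\left(\ln\frac{t}{a}\right)^{\beta-\a+1}$, I would apply the product and chain rules: differentiating the power $\left(\ln\frac{t}{a}\right)^{\beta-\a+1}$ in $t$ gives both a $\frac{1}{t}$ factor from the logarithm's derivative (producing the leading term) and, because the exponent contains $-\a=-\alpha(t)$, an $\alpha'(t)\ln\left(\ln\frac{t}{a}\right)$ contribution; meanwhile $\frac{d}{dt}$ acting on $\Gamma(1-\a)/\Gamma(\beta-\a+2)$ brings down $\alpha'(t)$ together with the Psi-function factors $\psi(1-\a)-\psi(\beta-\a+2)$ via $\frac{d}{d\a}\ln\Gamma=\psi$. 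Collecting the pure-power term and the $\alpha'(t)$-proportional terms, and multiplying by the outer $\frac{t}{\Gamma(1-\a)}$, should reproduce exactly the two-part formula in the statement, the $\alpha'(t)$ terms vanishing precisely when $\alpha$ is constant and recovering the classical Hadamard derivative.
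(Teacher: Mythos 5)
Your proposal is correct and follows essentially the same route as the paper: the substitution $u=\ln(\tau/a)/\ln(t/a)$ is exactly the paper's change of variable $\ln\frac{\tau}{a}=s\ln\frac{t}{a}$, the Marchaud case is handled via the same reformulation \eqref{eq:marchaud}, and for $\LHD$ the paper likewise first reduces the inner integral to $\frac{\Gamma(1-\a)\Gamma(\beta+1)}{\Gamma(\beta-\a+2)}\left(\ln\frac{t}{a}\right)^{\beta-\a+1}$ and then differentiates, with the $\alpha'(t)$ and $\psi$ terms arising exactly as you describe. Your explicit attention to the boundary term $x(a)$ for $\beta>0$ versus $\beta=0$ is slightly more careful than the paper's, but the argument is the same.
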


\begin{proof}
Starting with Definition~\ref{def:H:i}, we arrive to
$$
\LHI x(t)=\frac{1}{\Gamma(\a)}\left(\ln\frac{t}{a}\right)^{\a-1}
\int_a^t \left(1-\frac{\ln\frac{\t}{a}}{\ln\frac{t}{a}}\right)^{\a-1}\left(
\ln\frac{\t}{a}\right)^{\beta}\frac{d\t}{\t}.
$$
Performing the change of variable
$$
\ln\frac{\t}{a}=s\ln\frac{t}{a},
$$
it follows that
\begin{equation*}
\begin{split}
\LHI x(t)&=\DS\frac{1}{\Gamma(\a)}\left(
\ln\frac{t}{a}\right)^{\beta+\a}\int_0^1(1-s)^{\a-1}s^{\beta}ds\\
&=\DS\frac{1}{\Gamma(\a)}\left(\ln\frac{t}{a}\right)^{\beta+\a}B(\a,\beta+1)\\
&=\DS\frac{1}{\Gamma(\a)}\left(\ln\frac{t}{a}\right)^{\beta+\a}
\frac{\Gamma(\a)\Gamma(\beta+1)}{\Gamma(\beta+\a+1)}\\
&=\DS\frac{\Gamma(\beta+1)}{\Gamma(\beta+\a+1)}\left(
\ln\frac{t}{a}\right)^{\beta+\a},
\end{split}
\end{equation*}
where $B$ is the Beta function, that is,
$$
B(\lambda,\mu)=\int_0^1t^{\lambda-1}(1-t)^{\mu-1}\, dt,
\quad \lambda, \mu > 0.
$$
The formula for $\LHMD x(t)$ is obtained in a similar way,
using \eqref{eq:marchaud}:
\begin{equation*}
\begin{split}
\LHMD x(t)&=\DS\frac{\beta}{\Gamma(1-\a)}\left(\ln\frac{t}{a}\right)^{\beta-\a}B(1-\a,\beta)\\
&=\DS \frac{\Gamma(\beta+1)}{\Gamma(\beta-\a+1)}\left(\ln\frac{t}{a}\right)^{\beta-\a}.
\end{split}
\end{equation*}
To prove the formula for the left Hadamard fractional derivative of order $\a$,
we start with the same change of variables as before, to get
\begin{equation}
\label{aux1}
\LHD x(t)=\frac{t}{\Gamma(1-\a)}\frac{d}{dt}\left[
\left(\ln\frac{t}{a}\right)^{\beta-\a+1}\frac{\Gamma(1-\a)
\Gamma(\beta+1)}{\Gamma(\beta-\a+2)}\right].
\end{equation}
The intended formula follows directly by computing the derivative in \eqref{aux1}.
\end{proof}

As a consequence of Lemma~\ref{DIofLn}, we have that
$\LHD x(t)\not= \LHMD x(t)$. Next, we establish a
relation between these two types of differential operators.

\begin{theorem}
\label{thm:rel}
The following relation between the left Hadamard
and the left Hadamard--Marchaud fractional
derivatives of order $\a$ holds:
$$
\LHD x(t)=\LHMD x(t)-\frac{t \alpha'(t)}{\Gamma(1-\a)}
\int_a^t \ln\left(\ln \frac{t}{\t}\right)\left(
\ln \frac{t}{\t}\right)^{-\a}\frac{x(\t)}{\t}d\t.
$$
\end{theorem}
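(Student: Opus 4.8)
The plan is to compute the left Hadamard derivative $\LHD x(t)$ directly from Definition~\ref{def:H:d} by differentiating the integral with respect to $t$, and then to recognize the resulting pieces as the Hadamard--Marchaud derivative plus a correction. The subtlety is that $\alpha$ depends on $t$, so the upper limit, the exponent of the logarithmic kernel, and (after a change of variable) the argument of $x$ all carry $t$-dependence; moreover the kernel is singular at $\t=t$, so a naive application of the Leibniz rule would produce a divergent boundary term.

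To avoid this I would first perform the substitution
$$
u=\ln\frac{t}{\t},\qquad \t=te^{-u},\qquad \frac{d\t}{\t}=-du,
$$
which turns
$$
\int_a^t\left(\ln\frac{t}{\t}\right)^{-\a}\frac{x(\t)}{\t}\,d\t
=\int_0^{\ln(t/a)}u^{-\a}x(te^{-u})\,du.
$$
This moves the singularity of the kernel to the \emph{fixed} lower limit $u=0$, where $u^{-\a}$ is integrable because $-\a\in(-1,0)$; the upper limit $u=\ln(t/a)$ corresponds to the regular endpoint $\t=a$.

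Next I would differentiate in $t$ by the Leibniz rule. The boundary contribution comes only from the upper limit and equals $\frac1t\left(\ln\frac{t}{a}\right)^{-\a}x(a)$, while differentiating the integrand produces two terms: one from $\partial_t\,u^{-\a}=-\alpha'(t)\ln(u)\,u^{-\a}$ and one from $\partial_t\,x(te^{-u})=e^{-u}x'(te^{-u})$. The only thing that needs checking is that differentiation under the integral sign is legitimate, i.e. that the new kernel $\ln(u)\,u^{-\a}$ is still integrable near $u=0$; this holds since $u^{-\a}\ln u$ is dominated by $u^{-\a-\varepsilon}$ for small $\varepsilon>0$. This integrability verification is the main (and essentially the only) obstacle.

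Finally I would multiply by $t/\Gamma(1-\a)$ and undo the substitution. The boundary term together with the $x'$-integral reassemble, via the equivalent form \eqref{eq:marchaud}, into exactly $\LHMD x(t)$, while the remaining $\alpha'(t)$-term becomes
$$
-\frac{t\alpha'(t)}{\Gamma(1-\a)}\int_a^t\ln\left(\ln\frac{t}{\t}\right)\left(\ln\frac{t}{\t}\right)^{-\a}\frac{x(\t)}{\t}\,d\t,
$$
which is precisely the asserted correction, completing the proof.
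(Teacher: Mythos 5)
Your proof is correct, but it follows a genuinely different route from the paper's. The paper keeps the variable $\t$ and handles the singularity at $\t=t$ by truncating the integral at $\t=te^{-\epsilon}$, applying the Leibniz rule to the truncated integral, splitting the resulting singular integral into a piece with $x(\t)-x(t)$ and a piece with $x(t)$ alone, and then showing that the divergent contributions ($\epsilon^{-\a}$ from the boundary term and from the explicit integral of the $x(t)$-piece) combine into $\bigl(x(te^{-\epsilon})-x(t)\bigr)\epsilon^{-\a}\to 0$; this identifies $\LHMD x(t)$ directly in the form of Definition~\ref{def:H:M}. You instead substitute $u=\ln(t/\t)$ so that the singularity sits at the \emph{fixed} endpoint $u=0$, where $u^{-\a}$ and $u^{-\a}\ln u$ are integrable; then a single application of the Leibniz rule (justified by domination, uniformly in $t$ on compacta since $\alpha$ takes values in a compact subset of $(0,1)$) produces the boundary term $\frac{x(a)}{\Gamma(1-\a)}(\ln\frac{t}{a})^{-\a}$, the $x'$-integral, and the $\alpha'(t)$-correction, and you identify $\LHMD x(t)$ via the equivalent representation \eqref{eq:marchaud}. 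Your computation checks out: undoing the substitution turns $u^{-\a}e^{-u}x'(te^{-u})\,du$ into $(\ln\frac{t}{\t})^{-\a}x'(\t)\,\frac{d\t}{t}$ and $\ln(u)\,u^{-\a}x(te^{-u})\,du$ into $\ln(\ln\frac{t}{\t})(\ln\frac{t}{\t})^{-\a}\frac{x(\t)}{\t}\,d\t$, exactly as claimed. What each approach buys: yours avoids the delicate cancellation of $\epsilon^{-\a}$ divergences and isolates the only analytic issue (integrability of $u^{-\a}\ln u$ near $0$), but it leans on the Remark's equivalence \eqref{eq:marchaud} between the two forms of the Hadamard--Marchaud derivative; the paper's argument is longer but lands directly on Definition~\ref{def:H:M} without invoking that equivalence.
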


\begin{proof}
Starting with the definition,
and differentiating the integral, we obtain that
\begin{equation*}
\begin{split}
\LHD x(t)&=\DS\frac{t}{\Gamma(1-\a)}\frac{d}{dt}
\int_a^t \left(\ln\frac{t}{\tau}\right)^{-\a}\frac{x(\tau)}{\tau}d\tau\\
&\DS=\frac{t}{\Gamma(1-\a)}\lim_{\epsilon\to0}\frac{d}{dt}
\int_a^{te^{-\epsilon}} \left(\ln\frac{t}{\tau}\right)^{-\a}\frac{x(\tau)}{\tau}d\tau\\
&\DS=\frac{t}{\Gamma(1-\a)}\lim_{\epsilon\to0}\left[\frac{x(te^{-\epsilon})}{t\epsilon^{\a}}\right.\\
&\left.\DS\quad +\int_a^{te^{-\epsilon}} \left(\ln\frac{t}{\tau}\right)^{-\a}\frac{x(\tau)}{\tau}\left[
-\alpha'(t)\ln\left(\ln \frac{t}{\t}\right)-\frac{\a}{t \ln\frac{t}{\t}}\right]d\t\right]\\
&\DS=\frac{1}{\Gamma(1-\a)}\lim_{\epsilon\to0}\left[\frac{x(te^{-\epsilon})}{\epsilon^{\a}}-\a
\int_a^{te^{-\epsilon}} \left(\ln\frac{t}{\tau}\right)^{-\a-1}\frac{x(\tau)-x(t)}{\tau}d\tau\right.\\
&\quad \left. \DS  -\a\int_a^{te^{-\epsilon}} \left(\ln\frac{t}{\tau}\right)^{-\a-1}\frac{x(t)}{\tau}d\tau\right] \\
&\quad -\DS\alpha'(t)\frac{t}{\Gamma(1-\a)}\int_a^t
\ln\left(\ln \frac{t}{\t}\right)\left(\ln \frac{t}{\t}\right)^{-\a}\frac{x(\t)}{\t} d\t.
\end{split}
\end{equation*}
Integrating,
$$
\int_a^{te^{-\epsilon}} \left(\ln\frac{t}{\tau}\right)^{-\a-1}\frac{x(t)}{\tau}d\tau
=\frac{x(t)}{\a}\left(\epsilon^{-\a}-\left(\ln\frac{t}{a}\right)^{-\a}\right).
$$
Also, since $\a\in(0,1)$, then
$$
\lim_{\epsilon\to0}\frac{x(te^{-\epsilon})-x(t)}{\epsilon^{\a}}=0.
$$
Using these two relations, we prove that
\begin{equation*}
\begin{split}
\LHD x(t)&=\DS\frac{1}{\Gamma(1-\a)}\lim_{\epsilon\to0}\frac{x(te^{-\epsilon})-x(t)}{\epsilon^{\a}}\\
& \quad \DS +\frac{x(t)}{\Gamma(1-\a)}\left(\ln\frac{t}{a}\right)^{-\a}\\
& \quad +\frac{\a}{\Gamma(1-\a)}\int_a^{t} \left(\ln\frac{t}{\tau}\right)^{-\a-1}\frac{x(t)-x(\t)}{\tau}d\tau\\
&\DS\quad-\alpha'(t)\frac{t}{\Gamma(1-\a)}\int_a^t \ln\left(
\ln \frac{t}{\t}\right)\left(\ln \frac{t}{\t}\right)^{-\a}\frac{x(\t)}{\t} d\t.\\
&\DS =\LHMD x(t)-\alpha'(t)\frac{t}{\Gamma(1-\a)}\int_a^t \ln\left(
\ln \frac{t}{\t}\right)\left(\ln \frac{t}{\t}\right)^{-\a}\frac{x(\t)}{\t}d\t.
\end{split}
\end{equation*}
\end{proof}

The next corollary is a trivial consequence of Theorem~\ref{thm:rel}.
It asserts that it only makes sense to distinguish between
Hadamard and Hadamard--Marchaud fractional derivatives
in the variable-order case: for the classical situation
of constant order $\alpha$, the fractional derivatives coincide.

\begin{corollary}
If $\a \equiv \, const$, then both left Hadamard
and left Hadamard--Marchaud fractional derivatives coincide.
\end{corollary}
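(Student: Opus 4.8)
The plan is to read the result off directly from the identity established in Theorem~\ref{thm:rel}, which expresses the difference of the two operators as a single term:
$$
\LHD x(t)-\LHMD x(t)=-\frac{t \alpha'(t)}{\Gamma(1-\a)}
\int_a^t \ln\left(\ln \frac{t}{\t}\right)\left(\ln \frac{t}{\t}\right)^{-\a}\frac{x(\t)}{\t}\,d\t.
$$
The entire discrepancy between $\LHD$ and $\LHMD$ is therefore carried by the prefactor $\alpha'(t)$. First I would invoke the hypothesis $\a\equiv const$, which gives $\alpha'(t)=0$ for every $t\in[a,b]$.

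Next I would check that the remaining factors on the right-hand side stay finite, so that the product is genuinely zero rather than an indeterminate form. For $0<a\le t\le b$ the quantities $t$ and $\Gamma(1-\a)$ are finite and $\Gamma(1-\a)\neq 0$, while the integral converges by the standing assumption, imposed at the beginning of Section~\ref{sec:def}, that the defining integrals are well-defined on the chosen space of functions. Hence the right-hand side equals $0\cdot(\text{finite})=0$.

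I would then conclude that $\LHD x(t)=\LHMD x(t)$ for all $t\in[a,b]$, which is precisely the claim. The only point deserving a moment's attention is the verification in the previous paragraph that the vanishing factor $\alpha'(t)$ actually kills the term, i.e.\ that the accompanying integral does not blow up; this is guaranteed by the convergence hypotheses already in force, so no genuine obstacle arises and the argument is immediate.
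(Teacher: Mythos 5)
Your argument is correct and is exactly the paper's: the authors state the corollary as a ``trivial consequence of Theorem~\ref{thm:rel}'', i.e.\ the difference term carries the factor $\alpha'(t)$, which vanishes for constant $\a$, and the accompanying integral is finite (the singularity $\left(\ln\frac{t}{\t}\right)^{-\a}\left|\ln\left(\ln\frac{t}{\t}\right)\right|$ at $\t=t$ is integrable since $\a<1$). Your extra check that the product is not an indeterminate form is a sensible precaution but does not change the route.
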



\section{Approximations}
\label{sec:approx}

In this section we exhibit several results about approximations for
the Hadamard fractional operators, which are expressed by only using
integer-order derivatives of the function. With this in hand, given any
problem that depends on these fractional operators, we are able
to rewrite it by eliminating all the fractional operators, and by doing
so obtaining a classical problem, with dependence on integer-order derivatives only.
Then one can apply any known technique from the literature.
We mention \cite{Atanackovic1}, where an analogous idea was carried out for
the Riemann--Liouville fractional derivative. In the following, given
$k\in\mathbb N \cup\{0\}$, we define the sequences $x_{k,0}(t)$ and
$x_{k,1}(t)$ recursively by the formulas
$$
x_{0,0}(t)=x(t),
\quad x_{k+1,0}(t) =t\frac{d}{dt} x_{k,0}(t),
\mbox{ for } k\in\mathbb N\cup\{0\},
$$
and
$$
x_{0,1}(t)=x'(t),
\quad x_{k+1,1}(t) =\frac{d}{dt} (t x_{k,1}(t)),
\mbox{ for } k\in\mathbb N\cup\{0\}.
$$

The following definition is useful to describe our approximations.

\begin{definition}[Left and right moment of a function]
The left moment of $x$ of order $k\in\mathbb N$ is given by
$$
V_k(t) =(k-n)\int_a^t \left(\ln\frac{\t}{a}\right)^{k-n-1}\frac{x(\t)}{\t}d\t,
$$
and the right moment of $x$ of order $k\in\mathbb N$ by
$$
W_k(t) =(k-n)\int_t^b \left(\ln\frac{b}{\t}\right)^{k-n-1}\frac{x(\t)}{\t}d\t.
$$
\end{definition}

\begin{theorem}
\label{MainthmIntegral}
Fix $n \in \mathbb{N}$ and $N\geq n+1$, and let
$x(\cdot)\in C^{n+1}([a,b],\mathbb{R})$, where  $0<a<b$. Then,
\begin{multline}
\label{eq:approx}
\LHI x(t)=\sum_{k=0}^{n}A(k)\left(\ln\frac{t}{a}\right)^{\a+k} x_{k,0}(t)\\
+\sum_{k=n+1}^N B(k)\left(\ln\frac{t}{a}\right)^{\a+n-k}V_k(t)+E_N(t)
\end{multline}
with
\begin{equation*}
\begin{split}
A(k)&=\frac{1}{\Gamma(\a+k+1)}\left[1+\sum_{p=n-k+1}^N
\frac{\Gamma(p-\a-n)}{\Gamma(-\a-k)(p-n+k)!}\right],\quad k = 0, \ldots, n,\\
B(k)&=\frac{\Gamma(k-\a-n)}{\Gamma(\a)\Gamma(1-\a)(k-n)!}, \quad k=n+1,\ldots,N.
\end{split}
\end{equation*}
Relation \eqref{eq:approx} gives an approximation for the
left Hadamard fractional integral of order $\a$ with
error $E_{N}(t)$ bounded by
$$
|E_{N}(t)|\leq \max_{\tau\in[a,t]}|x_{n,1}(\tau)|
\frac{\exp((n+\a)^2+n+\a)}{\Gamma(n+1+\a)(n+\a)N^{n+\a}}\left(\ln\frac{t}{a}\right)^{n+\a}(t-a).
$$
\end{theorem}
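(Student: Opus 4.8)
The plan is to fix $t\in(a,b]$ and to observe that, in Definition~\ref{def:H:i}, the order $\a$ is evaluated at the \emph{upper} limit and is therefore constant with respect to the integration variable $\t$. Hence, contrary to the derivative operators in Lemma~\ref{DIofLn} and Theorem~\ref{thm:rel}, no term in $\alpha'(t)$ will appear, and the argument is, pointwise in $t$, an expansion of constant order $\alpha=\a$. First I would factor the kernel through $\ln\frac{t}{\t}=\ln\frac{t}{a}-\ln\frac{\t}{a}$, writing
\[
\left(\ln\frac{t}{\t}\right)^{\a-1}=\left(\ln\frac{t}{a}\right)^{\a-1}\left(1-\frac{\ln\frac{\t}{a}}{\ln\frac{t}{a}}\right)^{\a-1},
\]
and expanding the second factor in the generalized binomial series in the variable $\ln\frac{\t}{a}\big/\ln\frac{t}{a}\in[0,1)$, whose $p$-th coefficient is $\Gamma(p-\a+1)/(\Gamma(1-\a)\,p!)$. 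Inserting this into $\LHI x(t)$ and integrating term by term yields an infinite series whose general term is a constant multiple of $\left(\ln\frac{t}{a}\right)^{\a-1-p}\int_a^t\left(\ln\frac{\t}{a}\right)^{p}\frac{x(\t)}{\t}\,d\t$; these integrals are exactly the (unnormalized) left moments, which is how the $V_k$ will enter. This is the same change of variable used in the proof of Lemma~\ref{DIofLn}.

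Next I would reorganize this series into the three pieces of \eqref{eq:approx}. A block of $N-n$ consecutive terms is kept in moment form: matching the exponent $k-n-1$ of $\ln\frac{\t}{a}$ against the definition of $V_k$ (which absorbs the factor $k-n$) identifies, via the Beta-function identity $B(\lambda,\mu)=\Gamma(\lambda)\Gamma(\mu)/\Gamma(\lambda+\mu)$ already used in Lemma~\ref{DIofLn}, the coefficients $B(k)$ and the prefactor $\left(\ln\frac{t}{a}\right)^{\a+n-k}$. The integer-order values $x_{k,0}(t)=(t\,d/dt)^kx(t)$ are produced by integrating the remaining terms by parts: each integration transfers a derivative onto $x$, raises the power in the integrand, and leaves a surviving boundary term at $\t=t$ carrying the factor $\left(\ln\frac{t}{a}\right)^{\a+k}$, precisely the prefactor of the first sum. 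Collecting, for fixed $k$, the leading contribution together with the re-summed higher-order ones up to order $N$ produces the bracket in $A(k)$, the $1$ being the direct term and $\sum_{p=n-k+1}^{N}$ the accumulated correction. This step is lengthy but mechanical: it is the manipulation of ratios of Gamma functions such as $\Gamma(p-\a-n)/\Gamma(-\a-k)$, checked to telescope to the stated $A(k)$ and $B(k)$.

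Finally, for the error I would write $E_N(t)$ as a genuine remainder rather than a formal tail. Iterating the integration by parts $n+1$ times and using the recursion $x_{k+1,1}=\frac{d}{dt}(t\,x_{k,1})$ defining the sequence $x_{k,1}$, the remainder acquires an integrand governed by $x_{n,1}(\t)$, which is where the hypothesis $x\in C^{n+1}([a,b],\mathbb{R})$ and the factor $\max_{\t\in[a,t]}|x_{n,1}(\t)|$ enter. After extracting this maximum, estimating $\left(\ln\frac{t}{\t}\right)^{n+\a}\le\left(\ln\frac{t}{a}\right)^{n+\a}$ and $\int_a^t d\t\le t-a$, there remains a tail sum of the Gamma-ratio coefficients, which I would control by a non-asymptotic Stirling-type inequality giving growth like $p^{-(n+\a)-1}$ with constant $\exp((n+\a)^2+n+\a)$, and then compare with $\int_N^\infty s^{-(n+\a)-1}\,ds=1/((n+\a)N^{n+\a})$; assembling these with $1/\Gamma(n+1+\a)$ gives the claimed bound. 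The hard part is exactly this last estimate: unlike the identification of $A(k)$ and $B(k)$, which is forced at each step by the Gamma- and Beta-function identities of Lemma~\ref{DIofLn}, the fully explicit error bound, with the precise constant $\exp((n+\a)^2+n+\a)$ and decay rate $N^{-(n+\a)}$, requires a sharp, non-asymptotic control of a ratio of Gamma functions together with the integral comparison for the tail.
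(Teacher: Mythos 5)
Your proposal is correct and follows essentially the same route as the paper, whose entire proof consists of the observation that, since $\alpha(t)$ is evaluated at the upper limit and is therefore constant with respect to the integration variable, the constant-order expansion of \cite{PAT} applies verbatim with $\alpha$ replaced by $\alpha(t)$ --- precisely the reduction you identify at the outset. Your sketch of that constant-order argument (binomial expansion of the kernel after factoring out $\ln\frac{t}{a}$, integration by parts to generate the $x_{k,0}(t)$ terms and the moments $V_k$, identification of $A(k)$ and $B(k)$ through the Gamma/Beta identities, and the tail estimate via the Stirling-type bound on the generalized binomial coefficients combined with the integral comparison $\int_N^\infty s^{-(n+\alpha(t))-1}\,ds$) faithfully reproduces the structure of the cited proof and accounts for every factor in the stated error bound.
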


\begin{proof}
Similar to the one given in \cite{PAT},
replacing $\alpha$ by $\alpha(t)$.
\end{proof}

\begin{theorem}
\label{MainthmHderivative}
Fix $n \in \mathbb{N}$ and $N\geq n+1$, and let $x(\cdot)\in C^{n+1}([a,b],\mathbb{R})$,
where  $0<a<b$. Then,
$$
\LHD x(t)=S_1(t)-S_2(t)+E_{1,N}(t)+E_{2,N}(t)
$$
with
\begin{equation}
\label{eq:S1}
S_1(t)=\sum_{k=0}^{n}A(k)\left(\ln\frac{t}{a}\right)^{k-\a} x_{k,0}(t)
+\sum_{k=n+1}^N B(k)\left(\ln\frac{t}{a}\right)^{n-\a-k}V_k(t),
\end{equation}
where
\begin{equation*}
\begin{split}
A(k)&=\frac{1}{\Gamma(k+1-\a)}\left[1+\sum_{p=n-k+1}^N
\frac{\Gamma(p+\a-n)}{\Gamma(\a-k)(p-n+k)!}\right],\quad k = 0, \ldots, n,\\
B(k)&=\frac{\Gamma(k+\a-n)}{\Gamma(-\a)\Gamma(1+\a)(k-n)!}, \quad k=n+1,\ldots,N,
\end{split}
\end{equation*}
and
\begin{equation}
\label{eq:S2}
\begin{split}
S_2(t)&=\frac{tx(t)\alpha'(t)}{\Gamma(1-\a)}\left(
\ln\frac{t}{a}\right)^{1-\a}\left[\frac{\ln\left(
\ln\frac{t}{a}\right)}{1-\a}-\frac{1}{(1-\a)^2}\right.\\
& \quad \left.-\ln\left(\ln\frac{t}{a}\right)\sum_{k=0}^N \C\frac{(-1)^k}{k+1}
+\sum_{k=0}^N\C(-1)^k\sum_{p=1}^N\frac{1}{p(k+p+1)}\right]\\
&\quad +\frac{t\alpha'(t)}{\Gamma(1-\a)}\left(\ln\frac{t}{a}\right)^{1-\a}\\
&\quad \times \left[\ln\left(\ln\frac{t}{a}\right)\sum_{k=n+1}^{N+n+1}
\D\frac{(-1)^{k-n-1}}{k-n}\left(\ln\frac{t}{a}\right)^{n-k}V_{k}(t)\right.\\
&\quad \left.-\sum_{k=n+1}^{N+n+1}\D(-1)^{k-n-1}\sum_{p=1}^N\frac{1}{p(k+p-n)}\left(
\ln\frac{t}{a}\right)^{n-k-p} V_{k+p}(t)\right].
\end{split}
\end{equation}
The error $E_{1,N}(t)+E_{2,N}(t)$ of approximating
the left Hadamard fractional derivative
$\LHD x(t)$ by $S_1(t)-S_2(t)$ is bounded with
\begin{multline}
\label{eq:err1}
|E_{1,N}(t)|\\
\leq \max_{\tau\in[a,t]}|x_{n,1}(\tau)|
\frac{\exp((n-\a)^2+n-\a)}{\Gamma(n+1-\a)(n-\a)N^{n-\a}}
\left(\ln\frac{t}{a}\right)^{n-\a}(t-a)
\end{multline}
and
\begin{multline}
\label{eq:err2}
|E_{2,N}(t)|\\
\leq  \max_{\tau\in[a,t]}|x'(\tau)|
\frac{\left|t(2t-a)\alpha'(t)\left(\ln\frac{t}{a}\right)^{2-\alpha(t)}\right|
\exp(\alpha^2(t)-\a)}{\Gamma(2-{\a}){N^{1-\a}}}
\left[\left|\ln\left(\ln\frac{t}{a}\right)\right|+\frac{1}{N}\right].
\end{multline}
\end{theorem}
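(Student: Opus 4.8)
The plan is to reduce the claim to two separate expansions by means of Theorem~\ref{thm:rel}, which gives
$$\LHD x(t)=\LHMD x(t)-\frac{t\alpha'(t)}{\Gamma(1-\a)}\int_a^t\ln\left(\ln\frac{t}{\t}\right)\left(\ln\frac{t}{\t}\right)^{-\a}\frac{x(\t)}{\t}\,d\t.$$
Writing $R(t)$ for the integral correction on the right, it suffices to establish the two representations $\LHMD x(t)=S_1(t)+E_{1,N}(t)$ and $R(t)=S_2(t)-E_{2,N}(t)$: subtracting them reproduces $\LHD x(t)=S_1(t)-S_2(t)+E_{1,N}(t)+E_{2,N}(t)$, the parts proportional to $\alpha'(t)$ being collected in $S_2$ and $E_{2,N}$ and the remaining parts in $S_1$ and $E_{1,N}$.

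The first representation is essentially Theorem~\ref{MainthmIntegral} read with $\a$ replaced by $-\a$. By Lemma~\ref{DIofLn}, the Hadamard--Marchaud derivative acts on each basis function $\left(\ln(t/a)\right)^\beta$ exactly as the Hadamard integral does after the substitution $\a\mapsto-\a$; at the level of kernels this is the identity that turns the integral kernel $\left(\ln(t/\t)\right)^{\a-1}$ into the Marchaud kernel $\left(\ln(t/\t)\right)^{-\a-1}$. Because the Marchaud form \eqref{eq:marchaud} carries no outer $d/dt$, the order $\a=\alpha(t)$ enters only as a numerical parameter and produces no $\alpha'(t)$ contribution. Thus the expansion machinery behind Theorem~\ref{MainthmIntegral} applies verbatim under $\a\mapsto-\a$: the exponents $\a+k$ and $\a+n-k$ become $k-\a$ and $n-\a-k$, the coefficients $A(k)$, $B(k)$ take the stated form, and the error in \eqref{eq:approx} becomes \eqref{eq:err1} with $n+\a$ replaced by $n-\a$. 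This yields $\LHMD x(t)=S_1(t)+E_{1,N}(t)$ with $S_1(t)$ as in \eqref{eq:S1}.

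The core of the argument is the expansion of $R(t)$. After the substitution $u=\ln(\t/a)/\ln(t/a)\in[0,1]$ one has $\left(\ln(t/\t)\right)^{-\a}=\left(\ln(t/a)\right)^{-\a}(1-u)^{-\a}$ and $\ln\left(\ln(t/\t)\right)=\ln\left(\ln(t/a)\right)+\ln(1-u)$, so $\left(\ln(t/a)\right)^{1-\a}$ factors out and $R(t)$ reduces to an integral over $[0,1]$ of $\left[\ln\left(\ln(t/a)\right)+\ln(1-u)\right](1-u)^{-\a}$ against $x$. I would then expand $(1-u)^{-\a}=\sum_{k\ge0}\C(-1)^ku^k$ and $\ln(1-u)=-\sum_{p\ge1}u^p/p$ and integrate the resulting monomials $u^j$ against $x$. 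Following the scheme of Theorem~\ref{MainthmIntegral}, integration by parts exposes the endpoint value $x(t)$ and assembles the first ($x(t)$-proportional) bracket of \eqref{eq:S2}, whose leading coefficients are the exact integrals $\int_0^1(1-u)^{-\a}\,du=\frac{1}{1-\a}$ and $\int_0^1\ln(1-u)(1-u)^{-\a}\,du=-\frac{1}{(1-\a)^2}$, and whose remaining single and double sums in $\C$ are the finite boundary contributions produced by expanding the kernel. The genuinely $\t$-dependent part is recorded through the moments $V_k$ and $V_{k+p}$, giving the second bracket with coefficients $\D$ and the displayed powers of $\ln(t/a)$. Truncating both series at order $N$ turns the infinite sums into the finite ones of $S_2(t)$ and leaves the remainder $-E_{2,N}(t)$.

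The main obstacle is the bound \eqref{eq:err2}. In contrast with the integral case, the kernel now carries the extra factor $\ln\left(\ln(t/\t)\right)$, which is singular as $\t\to t$ (that is, $u\to1$), so the tails of both series must be controlled uniformly up to that endpoint. I would estimate the binomial tail through the asymptotics $\C\sim k^{\a-1}/\Gamma(-\a)$, the same estimate that underlies the error in Theorem~\ref{MainthmIntegral}; this yields the decay $N^{-(1-\a)}$ and the constant $\exp(\alpha^2(t)-\a)$. The logarithmic tail $\sum_{p>N}u^p/p$ is bounded separately and contributes the additive $1/N$ inside the bracket $[\,|\ln(\ln(t/a))|+1/N\,]$. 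A single integration by parts trading $x$ for $x'$---the reason \eqref{eq:err2} features $\max|x'|$ rather than $\max|x_{n,1}|$---produces the boundary factor $t(2t-a)$ together with the power $\left(\ln(t/a)\right)^{2-\a}$. Assembling these estimates gives \eqref{eq:err2}, and combining $\LHMD x(t)=S_1(t)+E_{1,N}(t)$ with $R(t)=S_2(t)-E_{2,N}(t)$ through Theorem~\ref{thm:rel} completes the proof.
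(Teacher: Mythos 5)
Your proposal is correct and follows essentially the same route as the paper: the paper's proof likewise splits $\LHD x(t)$ into a Marchaud-type part (expanded exactly as the integral case with $\a$ replaced by $-\a$, following \cite{PAT}) plus an $\alpha'(t)$-correction term that is integrated by parts and then expanded via the binomial and logarithm series, with the same boundary-term/moment bookkeeping and the same tail estimates $\left|\C\right|\leq\exp(\alpha^2(t)-\a)k^{\a-1}$ and $(2t-a)$-type integration-by-parts bounds for \eqref{eq:err2}. The only cosmetic difference is that you obtain the decomposition by invoking Theorem~\ref{thm:rel}, whereas the paper re-derives it directly by differentiating under the integral sign after the change of variables $t/\tau=u/a$; the two splittings coincide since $\overline S_1$ there equals $\LHMD x(t)$ by \eqref{eq:marchaud}.
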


\begin{proof}
Doing the change of variables
$$
\frac{t}{\t}=\frac{u}{a},
$$
and some calculations needed, we deduce that
\begin{equation*}
\begin{split}
_a&\mathcal{D}_t^{\a} x(t) =\DS\displaystyle\frac{t}{\Gamma(1-\a)}\frac{d}{dt}
\int_a^t \left(\ln\frac{u}{a}\right)^{-\alpha(t)}x\left(\frac{at}{u}\right)\frac{1}{u} du\\
&=\DS\frac{t}{\Gamma(1-\a)}\left[\frac{x(a)}{t}\left(\ln\frac{t}{a}\right)^{-\a}\right.\\
&\quad \left.+\DS\int_a^t  \left[ -\alpha'(t)\left(\ln\frac{u}{a}\right)^{-\alpha(t)}
\ln\left(\ln\frac{u}{a}\right)x\left(\frac{at}{u}\right)\frac{1}{u}
+\left(\ln\frac{u}{a}\right)^{-\a}x'\left(\frac{at}{u}\right)\frac{a}{u^2}\right]du\right]\\
&=\DS\frac{t}{\Gamma(1-\a)}\left[\frac{x(a)}{t}\left(\ln\frac{t}{a}\right)^{-\a}\right.\\
&\quad \left.\DS+\int_a^t  \left[ -\alpha'(t)\left(\ln\frac{t}{\t}\right)^{-\alpha(t)}
\ln\left(\ln\frac{t}{\t}\right)\frac{x(\t)}{\t}
+\left(\ln\frac{t}{\t}\right)^{-\a}\frac{x'(\t)}{t}\right]d\t\right].
\end{split}
\end{equation*}
Define
$$
\overline S_1(t)=\frac{x(a)}{\Gamma(1-\a)}\left(\ln\frac{t}{a}\right)^{-\a}
+\frac{1}{\Gamma(1-\a)}\int_a^t \left(\ln\frac{t}{\t}\right)^{-\alpha(t)}x'(\t)d\t
$$
and
$$
\overline S_2(t)=\frac{t\alpha'(t)}{\Gamma(1-{\a})}
\int_a^t \left(\ln\frac{t}{\t}\right)^{-\alpha(t)}\ln\left(\ln\frac{t}{\t}\right)\frac{x(\t)}{\t}d\t.
$$
When $\a=\alpha$, i.e., when $\a$ is constant, formula $\overline S_1(t)$ is equivalent
to the left Hadamard fractional derivative \eqref{classicalHD} (see \cite{Kilbas2}),
and following the same techniques as the ones given in \cite{PAT}, we obtain formula
$S_1(t)$ as in \eqref{eq:S1} and the upper bound formula for $|E_{1,N}(t)|$ as in \eqref{eq:err1}.
About the sum $\overline S_2(t)$, starting with the relation
$$
\overline S_2(t)=\frac{t\alpha'(t)}{\Gamma(1-{\a})}\int_a^t x(\t)
\cdot\left[\frac{d}{d\t} \int_a^\t \left(\ln\frac{t}{u}\right)^{-\alpha(t)}
\ln\left(\ln\frac{t}{u}\right)\frac{du}{u}\right]d\t,
$$
and performing integration by parts, we get
\begin{multline*}
\overline S_2(t)=\frac{t\alpha'(t)}{\Gamma(1-{\a})}\left[x(t)
\int_a^t \left(\ln\frac{t}{u}\right)^{-\alpha(t)}
\ln\left(\ln\frac{t}{u}\right)\frac{du}{u}\right.\\
-\left.\int_a^t x'(\t)\left[\int_a^\t \left(\ln\frac{t}{u}\right)^{-\alpha(t)}
\ln\left(\ln\frac{t}{u}\right)\frac{du}{u}\right]d\t\right].
\end{multline*}
From simple computations, we have
$$
\int_a^t \left(\ln\frac{t}{u}\right)^{-\alpha(t)}
\ln\left(\ln\frac{t}{u}\right)\frac{du}{u}
=\left(\ln\frac{t}{a}\right)^{1-\a}\left[
\frac{\ln\left(\ln\frac{t}{a}\right)}{1-\a}-\frac{1}{(1-\a)^2}\right].
$$
Also, by Taylor's theorem, we have the two following formulas:
\begin{equation*}
\begin{split}
\left(\ln\frac{t}{u}\right)^{-\alpha(t)}
&= \left(\ln\frac{t}{a}\right)^{-\alpha(t)}\left(1
-\frac{\ln\frac{u}{a}}{\ln\frac{t}{a}}\right)^{-\alpha(t)}\\
&=\left(\ln\frac{t}{a}\right)^{-\alpha(t)}\sum_{k=0}^N \C (-1)^k
\frac{\left(\ln\frac{u}{a}\right)^k}{\left(\ln\frac{t}{a}\right)^k}+E'_N(t)
\end{split}
\end{equation*}
and
$$
\ln\left(\ln\frac{t}{u}\right)
=\ln\left(\ln\frac{t}{a}\right)
+\ln\left(1-\frac{\ln\frac{u}{a}}{\ln\frac{t}{a}}\right)
=\ln\left(\ln\frac{t}{a}\right)-\sum_{p=1}^N \frac{1}{p}
\frac{\left(\ln\frac{u}{a}\right)^p}{\left(\ln\frac{t}{a}\right)^p}+E''_N(t).
$$
Combining all the previous equalities, we get
\begin{equation*}
\begin{split}
&\overline{S}_2(t)=\DS\frac{t\alpha'(t)}{\Gamma(1-{\a})}\left[
x(t)\left(\ln\frac{t}{a}\right)^{1-\alpha(t)}
\left[\frac{\ln\left(\ln\frac{t}{a}\right)}{1-\a}-\frac{1}{(1-\a)^2}\right]\right.\\
&\quad \DS-\int_a^t x'(\t)\left(\ln\frac{t}{a}\right)^{-\alpha(t)}\ln\left(\ln\frac{t}{a}\right)
\sum_{k=0}^N \C \frac{(-1)^k}{\left(\ln\frac{t}{a}\right)^k}
\left(\int_a^\t \left(\ln\frac{u}{a}\right)^k \, \frac{du}{u}\right)d\t\\
&\quad \DS\left. +\int_a^t x'(\t) \left(\ln\frac{t}{a}\right)^{-\alpha(t)}
\sum_{k=0}^N \C\frac{(-1)^k}{\left(\ln\frac{t}{a}\right)^k} \,
\sum_{p=1}^N \frac{\int_a^\t \left(\ln\frac{u}{a}\right)^{k+p}
\, \frac{du}{u}}{p\left(\ln\frac{t}{a}\right)^p}\,d\t\right]+E_{2,N}(t)\\
&=\quad \DS\frac{t\alpha'(t)}{\Gamma(1-{\a})}\left(
\ln\frac{t}{a}\right)^{-\a}\left[x(t)\ln\frac{t}{a}
\left[\frac{\ln\left(\ln\frac{t}{a}\right)}{1-\a}-\frac{1}{(1-\a)^2}\right]\right.\\
&\quad \DS-\ln\left(\ln\frac{t}{a}\right)\sum_{k=0}^N \C \frac{(-1)^k}{\left(
\ln\frac{t}{a}\right)^k(k+1)}\int_a^t x'(\t)\left(\ln\frac{\t}{a}\right)^{k+1}d\t\\
&\quad \DS\left. +\sum_{k=0}^N \C\frac{(-1)^k}{\left(\ln\frac{t}{a}\right)^k} \,
\sum_{p=1}^N \frac{\int_a^t x'(\t)
\left(\ln\frac{\t}{a}\right)^{k+p+1}\,d\t}{p\left(\ln\frac{t}{a}\right)^p(k+p+1)}\right]+E_{2,N}(t).
\end{split}
\end{equation*}
Formula \eqref{eq:S2} is deduced using relations
$$
\int_a^t x'(\t)\left(\ln\frac{\t}{a}\right)^{k+1}\, d\t
= x(t)\left(\ln\frac{t}{a}\right)^{k+1}-V_{k+n+1}(t)
$$
and
$$
\int_a^t x'(\t)\left(\ln\frac{\t}{a}\right)^{k+p+1}\, d\t
= x(t)\left(\ln\frac{t}{a}\right)^{k+p+1}-V_{k+p+n+1}(t).
$$
The error that occurs on this approximation is bounded by
\begin{equation*}
\begin{split}
|E_{2,N}(t)|&\leq \left| \frac{t\alpha'(t)}{
\Gamma(1-{\a})}\left(\ln\frac{t}{a}\right)^{-\a} \right|\\
&\quad \times\left|-\ln\left(\ln\frac{t}{a}\right)\sum_{k=N+1}^\infty
\C \frac{(-1)^k}{k+1}\int_a^t x'(\t)\frac{\left(
\ln\frac{\t}{a}\right)^{k+1}}{\left(\ln\frac{t}{a}\right)^k}d\t\right.\\
& \quad \left. +\sum_{k=N+1}^\infty \C(-1)^k \,\sum_{p=N+1}^\infty
\frac{1}{p(k+p+1)}\int_a^t x'(\t) \frac{\left(
\ln\frac{\t}{a}\right)^{k+p+1}}{\left(\ln\frac{t}{a}\right)^{k+p}}\,d\t\right|.
\end{split}
\end{equation*}
Since
$$
\left|\C\right|\leq\frac{\exp(\alpha^2(t)-\a)}{k^{1-\a}},
$$
it follows that
$$
\begin{array}{ll}
|E_{2,N}(t)|&\leq \DS\max_{\tau\in[a,t]}|x'(\tau)|
\left| \frac{t\alpha'(t)}{\Gamma(1-{\a})}\left(
\ln\frac{t}{a}\right)^{-\a} \right|\exp(\alpha^2(t)-\a)\\
&\DS\times\left[\left|\ln\left(\ln\frac{t}{a}\right)\right|
\sum_{k=N+1}^\infty \frac{1}{k^{1-\a}(k+1)}\int_a^t
\frac{\left(\ln\frac{\t}{a}\right)^{k+1}}{\left(\ln\frac{t}{a}\right)^k}d\t\right.\\
&\DS\left. +\sum_{k=N+1}^\infty \,\sum_{p=N+1}^\infty \frac{1}{k^{1-\a}p(k+p+1)}
\int_a^t  \frac{\left(\ln\frac{\t}{a}\right)^{k+p+1}}{\left(\ln\frac{t}{a}\right)^{k+p}}\,d\t\right].
\end{array}
$$
Integrating by parts,
\begin{equation*}
\begin{split}
\int_a^t\frac{\left(\ln\frac{\t}{a}\right)^{k+1}}{\left(\ln\frac{t}{a}\right)^k}d\t
&=\frac{1}{\left(\ln\frac{t}{a}\right)^k}\int_a^t\tau
\cdot \left(\ln\frac{\t}{a}\right)^{k+1}\frac{d\t}{\t}\\
&=\frac{t\left(\ln\frac{t}{a}\right)^2}{k+2}-\frac{1}{k+2}
\int_a^t\frac{\left(\ln\frac{\t}{a}\right)^{k+2}}{\left(
\ln\frac{t}{a}\right)^k}d\t\\
&\leq \frac{(2t-a)\left(\ln\frac{t}{a}\right)^2}{k+2}
\end{split}
\end{equation*}
by the inequality
$$
0 \leq \frac{\left(\ln\frac{\t}{a}\right)^{k+2}}{\left(\ln\frac{t}{a}\right)^k}
\leq \left(\ln\frac{t}{a}\right)^2.
$$
Similarly, it can be proven that
$$
\int_a^t\frac{\left(\ln\frac{\t}{a}\right)^{k+p+1}}{\left(\ln\frac{t}{a}\right)^{k+p}}d\t
\leq \frac{(2t-a)\left(\ln\frac{t}{a}\right)^2}{k+p+2}.
$$
Then,
$$
\begin{array}{ll}
|E_{2,N}(t)|&\leq \DS\max_{\tau\in[a,t]}|x'(\tau)|
\left| \frac{t(2t-a)\alpha'(t)}{\Gamma(1-{\a})}\left(
\ln\frac{t}{a}\right)^{2-\a} \right|\exp(\alpha^2(t)-\a)\\
&\DS\times\left[\left|\ln\left(\ln\frac{t}{a}\right)\right|
\int_{N}^\infty \frac{1}{k^{1-\a}(k+1)(k+2)}dk\right.\\
&\DS\left. +\int_{N}^\infty \,\int_{N}^\infty
\frac{1}{k^{1-\a}p(k+p+1)(k+p+2)}dp\,dk\right].
\end{array}
$$
Finally, since
$$
\int_N^\infty\frac{1}{k^{1-\a}(k+1)(k+2)}\,dk
<\int_N^\infty\frac{1}{k^{2-\a}}\,dk=\frac{1}{(1-\a)N^{1-\a}}
$$
and
\begin{equation*}
\begin{split}
\int_N^\infty\int_N^\infty\frac{1}{k^{1-\a}p(k+p+1)(k+p+2)}\,dp\, dk
&< \int_N^\infty\int_N^\infty\frac{1}{k^{2-\a}p^2}\,dp\, dk\\
&=\frac{1}{(1-\a)N^{2-\a}},
\end{split}
\end{equation*}
formula \eqref{eq:err2} follows.
\end{proof}

\begin{remark}
Taking into consideration \eqref{caputoVarorder} and Lemma~\ref{DIofLn},
a similar approximation to that given by Theorem~\ref{MainthmHderivative}
can be deduced for the Hadamard--Caputo fractional derivative.
\end{remark}

At last, we consider now the left Hadamard--Marchaud fractional derivative.
As we will see, the expansion formula is similar to the Hadamard
fractional integral, just replacing $\a$ by $-\a$.

\begin{theorem}
\label{MainthmHMderivative}
Fix $n \in \mathbb{N}$ and $N\geq n+1$, and let
$x(\cdot)\in C^{n+1}([a,b],\mathbb{R})$, where  $0<a<b$.
Then,
$$
\LHMD x(t)=S_1(t)+E_{1,N}(t)
$$
with $S_1(t)$ and $E_{1,N}(t)$ given as
in Theorem~\ref{MainthmHderivative}.
\end{theorem}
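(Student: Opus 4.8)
The key observation is that the Hadamard--Marchaud derivative is nothing but the ``regular part'' of the Hadamard derivative already isolated in the proof of Theorem~\ref{MainthmHderivative}. Indeed, by the equivalent Marchaud representation \eqref{eq:marchaud},
$$
\LHMD x(t)=\frac{x(a)}{\Gamma(1-\a)}\left(\ln\frac{t}{a}\right)^{-\a}
+\frac{1}{\Gamma(1-\a)}\int_a^t \left(\ln\frac{t}{\t}\right)^{-\a}x'(\t)\,d\t,
$$
which is precisely the quantity denoted $\overline S_1(t)$ in that proof. The point is that, unlike in Definition~\ref{def:H:d}, no outer operator $\frac{d}{dt}$ is applied to the integral, so the variable-order correction $\overline S_2(t)$ carrying the factor $\alpha'(t)$ simply never appears; equivalently, this is the content of Theorem~\ref{thm:rel}, whose extra integral term is exactly $\overline S_2(t)$.

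Granting this identification, the proof is immediate: in the proof of Theorem~\ref{MainthmHderivative} it was shown that, when $\a$ is constant, $\overline S_1(t)$ reduces to the classical Hadamard derivative, and that following the technique of \cite{PAT} with $\alpha$ replaced by $\alpha(t)$ one obtains $\overline S_1(t)=S_1(t)+E_{1,N}(t)$, with $S_1$ as in \eqref{eq:S1} and the bound on $E_{1,N}$ as in \eqref{eq:err1}. Substituting $\LHMD x(t)=\overline S_1(t)$ yields the statement. No separate error term $E_{2,N}$ is produced because $\overline S_2(t)$ is absent.

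As a cross-check, and as the remark preceding the statement suggests, I could instead prove the expansion from scratch in parallel with the integral case of Theorem~\ref{MainthmIntegral}. Writing $\left(\ln\frac{t}{\t}\right)^{-\a}=\left(\ln\frac{t}{a}\right)^{-\a}\left(1-\frac{\ln\frac{\t}{a}}{\ln\frac{t}{a}}\right)^{-\a}$, expanding by the binomial series, and integrating term by term against the moments $V_k$ reproduces the coefficients $A(k)$ and $B(k)$ of Theorem~\ref{MainthmHderivative}; these are literally the coefficients of Theorem~\ref{MainthmIntegral} under the formal substitution $\a\mapsto-\a$, which explains the heuristic. The error would be controlled by the same estimate $\left|\C\right|\leq\exp(\alpha^2(t)-\a)/k^{1-\a}$ employed above. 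Since every ingredient is already in place, there is essentially no obstacle here; the only thing to verify with care is the bookkeeping identity $\LHMD x(t)=\overline S_1(t)$ (so that precisely the $S_1$-part and its bound \eqref{eq:err1} carry over), together with the fact that $x\in C^{n+1}$ is what legitimizes the term-by-term integration and the appearance of $x_{n,1}$ in $E_{1,N}$.
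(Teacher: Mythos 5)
Your proposal is correct and is essentially the paper's own argument: the paper's proof is the one-line observation that the result ``follows from relation \eqref{eq:marchaud} and Theorem~\ref{MainthmHderivative},'' which is exactly your identification $\LHMD x(t)=\overline S_1(t)$ followed by the expansion $\overline S_1(t)=S_1(t)+E_{1,N}(t)$ already established there. Your additional cross-check via the binomial expansion is consistent with how $S_1$ is derived in \cite{PAT} but is not needed beyond what the paper does.
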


\begin{proof}
It follows from relation \eqref{eq:marchaud}
and Theorem~\ref{MainthmHderivative}.
\end{proof}

With simple modifications, similar formulas for
the right fractional operators can be deduced.
In case of the right Hadamard fractional integral
$\RHI x(t)$ and of the right Hadamard--Marchaud fractional derivative
$\RHMD x(t)$, they are similar to the ones proved in \cite{PAT}, replacing
$\alpha$ by $\a$. For the right Hadamard fractional derivative $\RHD x(t)$,
the expansion formula is
$$
\RHD x(t)=S_1(t)+S_2(t)+E_{1,N}(t)+E_{2,N}(t)
$$
with
$$
S_1(t)=\sum_{k=0}^{n}A(k)\left(\ln\frac{b}{t}\right)^{k-\a} x_{k,0}(t)
+\sum_{k=n+1}^N B(k)\left(\ln\frac{b}{t}\right)^{n-\a-k}W_k(t),
$$
where
\begin{equation*}
\begin{split}
A(k)&=\frac{(-1)^k}{\Gamma(k+1-\a)}\left[1+\sum_{p=n-k+1}^N
\frac{\Gamma(p+\a-n)}{\Gamma(\a-k)(p-n+k)!}\right],\quad k = 0, \ldots, n,\\
B(k)&=\frac{\Gamma(k+\a-n)}{\Gamma(-\a)\Gamma(1+\a)(k-n)!}, \quad k=n+1,\ldots,N,
\end{split}
\end{equation*}
and
\begin{equation*}
\begin{split}
S_2(t)&=\frac{tx(t)\alpha'(t)}{\Gamma(1-\a)}\left(\ln\frac{b}{t}\right)^{1-\a}\left[
\frac{\ln\left(\ln\frac{b}{t}\right)}{1-\a}-\frac{1}{(1-\a)^2}\right.\\
&\left.-\ln\left(\ln\frac{b}{t}\right)\sum_{k=0}^N \C\frac{(-1)^k}{k+1}
+\sum_{k=0}^N\C(-1)^k\sum_{p=1}^N\frac{1}{p(k+p+1)}\right]\\
&+\frac{t\alpha'(t)\left(\ln\frac{b}{t}\right)^{1-\a}}{\Gamma(1-\a)}\left[
\ln\left(\ln\frac{b}{t}\right)\sum_{k=n+1}^{N+n+1}
\D\frac{(-1)^{k-n-1}}{k-n}\left(\ln\frac{b}{t}\right)^{n-k}W_{k}(t)\right.\\
&\left.-\sum_{k=n+1}^{N+n+1}\D(-1)^{k-n-1}\sum_{p=1}^N\frac{1}{p(k+p-n)}\left(
\ln\frac{b}{t}\right)^{n-k-p} W_{k+p}(t)\right].
\end{split}
\end{equation*}
Also,
$$
|E_{1,N}(t)|\leq \max_{\tau\in[t,b]}|x_{n,1}(\tau)| \frac{\exp((n-\a)^2+n-\a)}{
\Gamma(n+1-\a)(n-\a)N^{n-\a}}\left(\ln\frac{b}{t}\right)^{n-\a}(b-t)
$$
and
$$
|E_{2,N}(t)|\leq  \max_{\tau\in[t,b]}|x'(\tau)|
\frac{\left|bt\alpha'(t)\left(\ln\frac{b}{t}\right)^{2-\alpha(t)}\right|
\exp(\alpha^2(t)-\a)}{\Gamma(2-{\a}){N^{1-\a}}}
\left[\left|\ln\left(\ln\frac{b}{t}\right)\right|+\frac{1}{N}\right].
$$


\section{Examples and applications}
\label{sec:example}

We begin by testing the efficiency of the given procedure with a concrete example.
For simplicity, we consider the same order $\DS\a=\frac{t}{10}$ and
the same test function $x(t)=(\ln t)^2$, for $t\in[1,5]$, for all examples below.
Using Lemma~\ref{DIofLn}, we know the exact expressions of the fractional operators of $x$, to know:
$$
{_1\mathcal{I}_t^{\a}} x(t)
=\frac{2}{\Gamma\left(3+\frac{t}{10}\right)}(\ln t)^{2+\frac{t}{10}},
$$
\begin{multline*}
{_1\mathcal{D}_t^{\a}} x(t)
=\frac{2}{\Gamma\left(3-\frac{t}{10}\right)}(\ln t)^{2-\frac{t}{10}}\\
-\frac{t}{5\Gamma\left(4-\frac{t}{10}\right)}(\ln t)^{3-\frac{t}{10}}
\left[\ln(\ln t)+\psi\left(1-\frac{t}{10}\right)
-\psi\left(4-\frac{t}{10}\right)\right],
\end{multline*}
and
$$
{_1\mathbb{D}_t^{\a}} x(t)=\frac{2}{\Gamma\left(3
-\frac{t}{10}\right)}(\ln t)^{2-\frac{t}{10}}.
$$
For example, from Theorem~\ref{MainthmIntegral}
the approximation formulas for
${_1\mathcal{I}_t^{\a}} x(t)$, when considering
the cases $n=1$ and $N=2,3,4$, are the following:
$$
{_1\mathcal{I}_t^{\a}} x(t) \approx \frac {( \ln t) ^{2+\frac{t}{10}} \left({t}^{2}-20\,t
+300 \right) }{300 \Gamma\left(2+\frac{t}{10}\right)} \, ,
\quad N=2,
$$
$$
{_1\mathcal{I}_t^{\a}} x(t) \approx
{\frac { (\ln t) ^{2+\frac{t}{10}} \left( -t^3 +40 t^2 -700\,t
+12000 \right) }{12000 \Gamma\left( 2+\frac{t}{10} \right) }},
\quad N=3,
$$
$$
{_1\mathcal{I}_t^{\a}} x(t) \approx
{\frac{(\ln t) ^{2+\frac{t}{10}}\left(t^4-70t^3+1900t^2-33000t
+600000\right)}{600000 \Gamma\left(2+\frac{t}{10}\right)}},
\quad N=4.
$$
In Figures~\ref{BaN1}--\ref{BaN3} we show the plots of the exact expressions of
${_1\mathcal{I}_t^{\a}} x(t)$, ${_1\mathcal{D}_t^{\a}} x(t)$ and ${_1\mathbb{D}_t^{\a}} x(t)$
and some approximations of them, for different values of $N$ and $n$. We first fix $n=1$
and consider $N$ variable, with the values $N=2,3,4$, and also the case $N=4$ fixed,
and $n=1,2,3$ variable. The error $E$ is measured using the $L^2$ norm:
\begin{equation}
\label{eq:L2norm}
E(f,g)=\left(\int_1^5 (f(t)-g(t))^2\,dt\right)^{\frac12}.
\end{equation}
\begin{figure}[ht]
\centering
\psfrag{x\(t\)=ln\(t\)^2}{\tiny \hspace*{-0.3cm} ${_1\mathcal{I}_t^{\a}} x(t)$}
\psfrag{t}{\tiny $t$}
\subfigure[$n=1$]{\includegraphics[scale=0.35,angle =-90]{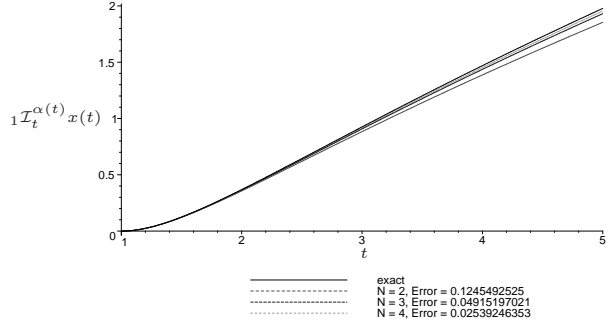}}
\psfrag{x\(t\)=ln\(t\)^2}{ \tiny  \hspace*{-0.3cm} ${_1\mathcal{I}_t^{\a}} x(t)$}
\psfrag{t}{\tiny $t$}
\subfigure[$N=4$]{\includegraphics[scale=0.35,angle =-90]{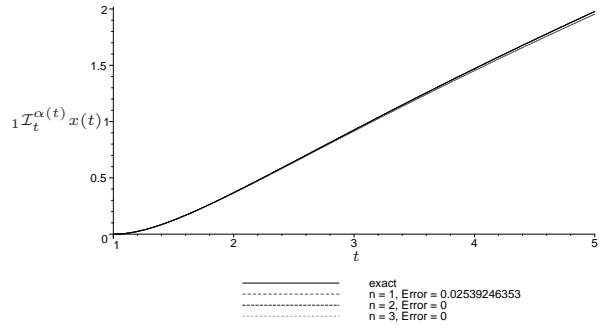}}
\caption{Exact and numerical approximations obtained from Theorem~\ref{MainthmIntegral}
for the left Hadamard integral of variable fractional order
${_1\mathcal{I}_t^{\a}} x(t)$ with $\a=\frac{t}{10}$
and $x(t)=(\ln t)^2$, $t\in[1,5]$.}
\label{BaN1}
\end{figure}
\begin{figure}[ht]
\begin{center}
\psfrag{x\(t\)=ln\(t\)^2}{\tiny  \hspace*{-0.35cm} ${_1\mathcal{D}_t^{\a}} x(t)$}
\psfrag{t}{\tiny $t$}
\subfigure[$n=1$]{\includegraphics[scale=0.35,angle =-90]{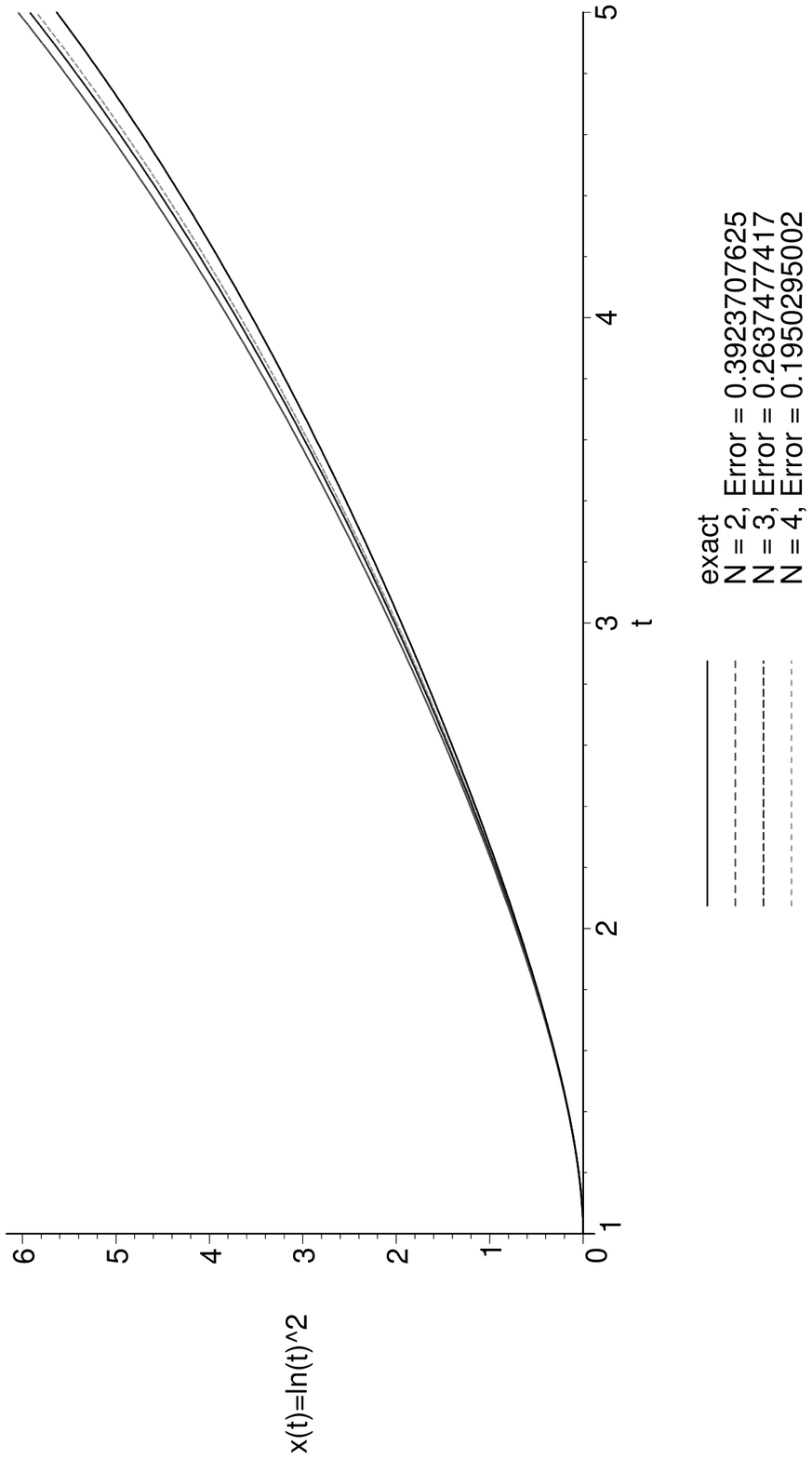}}
\psfrag{x\(t\)=ln\(t\)^2}{\tiny  \hspace*{-0.35cm} ${_1\mathcal{D}_t^{\a}} x(t)$}
\psfrag{t}{\tiny $t$}
\subfigure[$N=4$]{\includegraphics[scale=0.35,angle =-90]{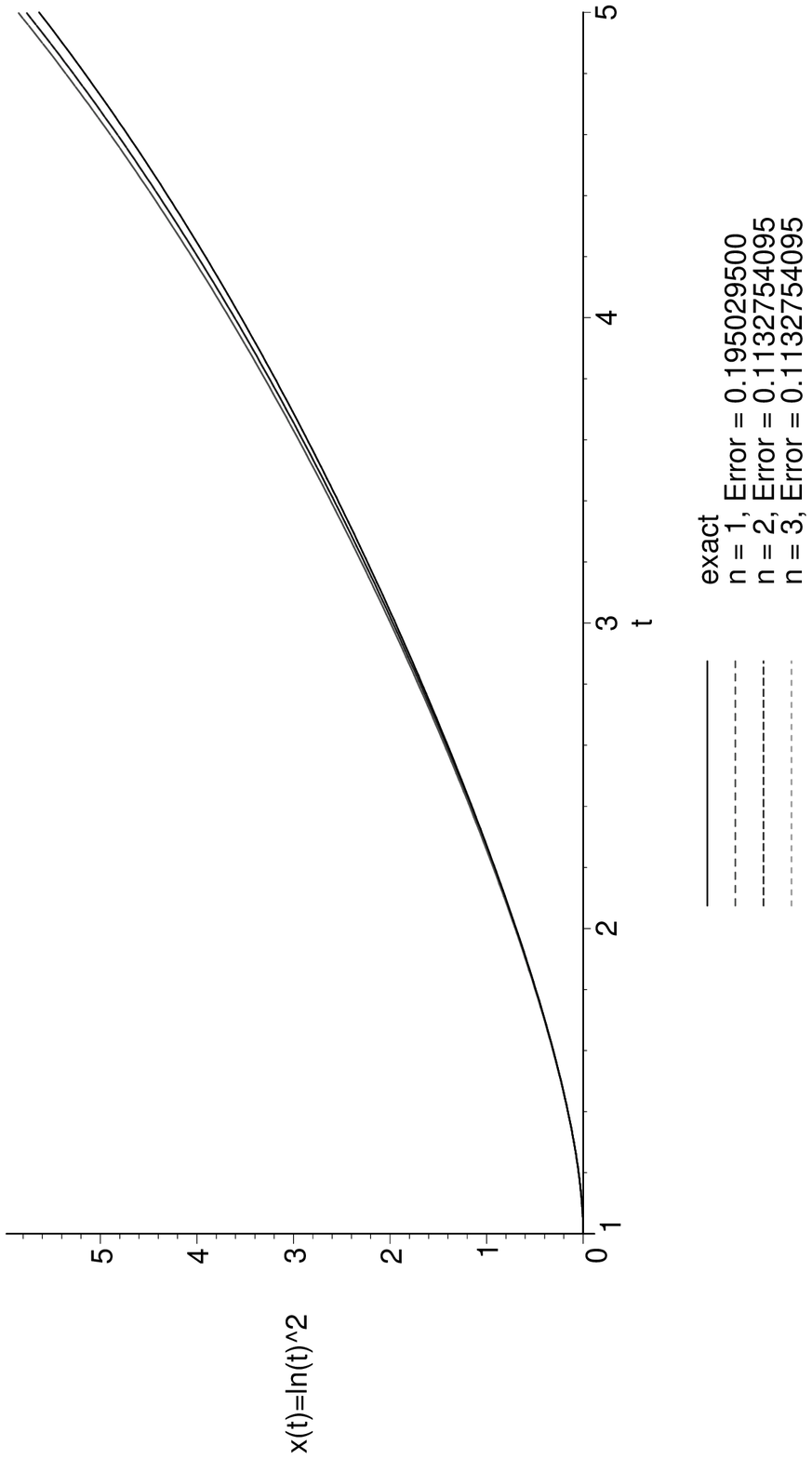}}
\end{center}
\caption{Exact and numerical approximations obtained from Theorem~\ref{MainthmHderivative}
for the left Hadamard derivative of variable fractional order
${_1\mathcal{D}_t^{\a}} x(t)$ with $\a=\frac{t}{10}$
and $x(t)=(\ln t)^2$, $t\in[1,5]$.}
\label{BaN2}
\end{figure}
\begin{figure}[ht]
\begin{center}
\psfrag{x\(t\)=ln\(t\)^2}{\tiny  \hspace*{-0.3cm} ${_1\mathbb{D}_t^{\a}} x(t)$}
\psfrag{t}{\tiny $t$}
\subfigure[$n=1$]{\includegraphics[scale=0.35,angle =-90]{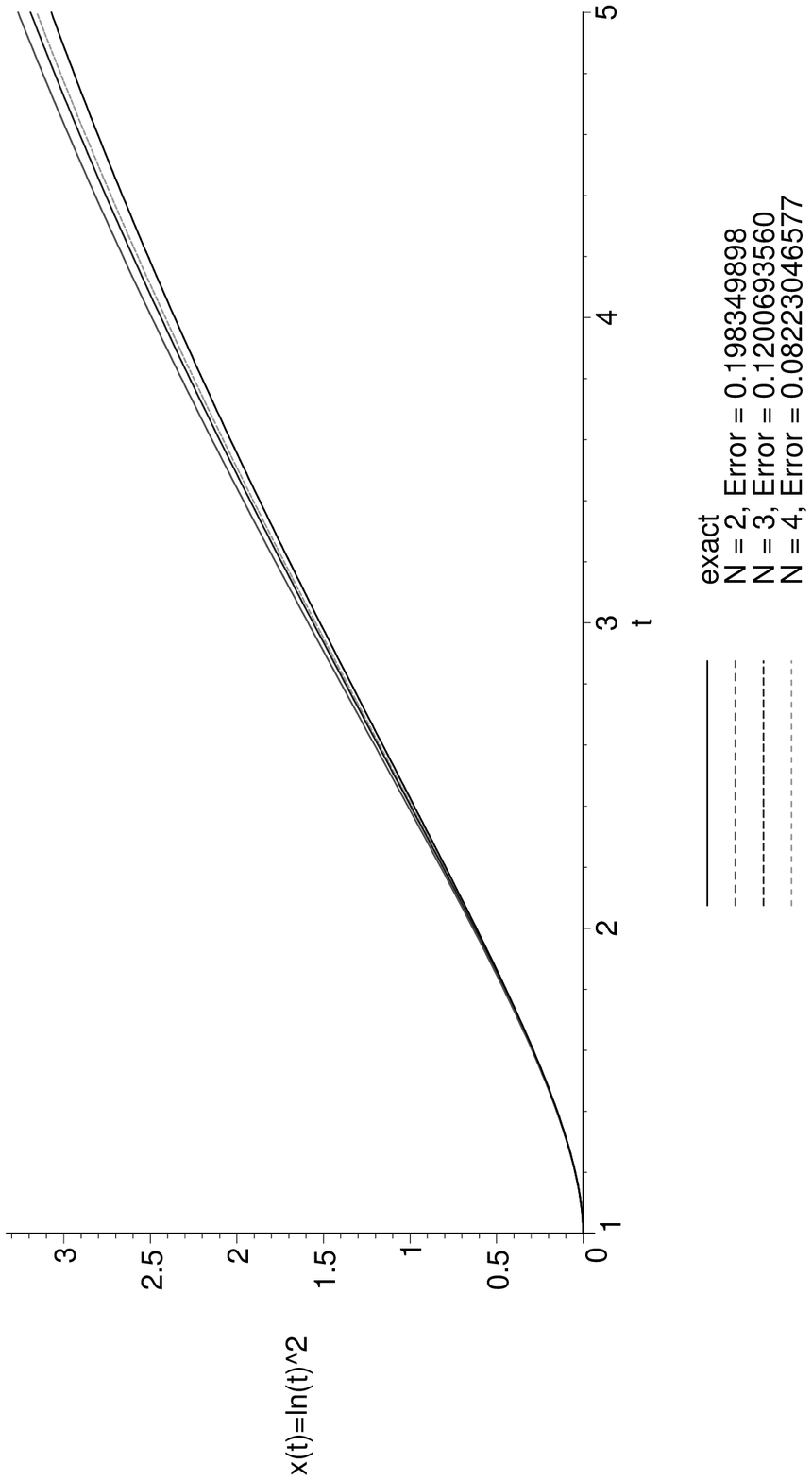}}
\psfrag{x\(t\)=ln\(t\)^2}{\tiny  \hspace*{-0.3cm} ${_1\mathbb{D}_t^{\a}} x(t)$}
\psfrag{t}{\tiny $t$}
\subfigure[$N=4$]{\includegraphics[scale=0.35,angle =-90]{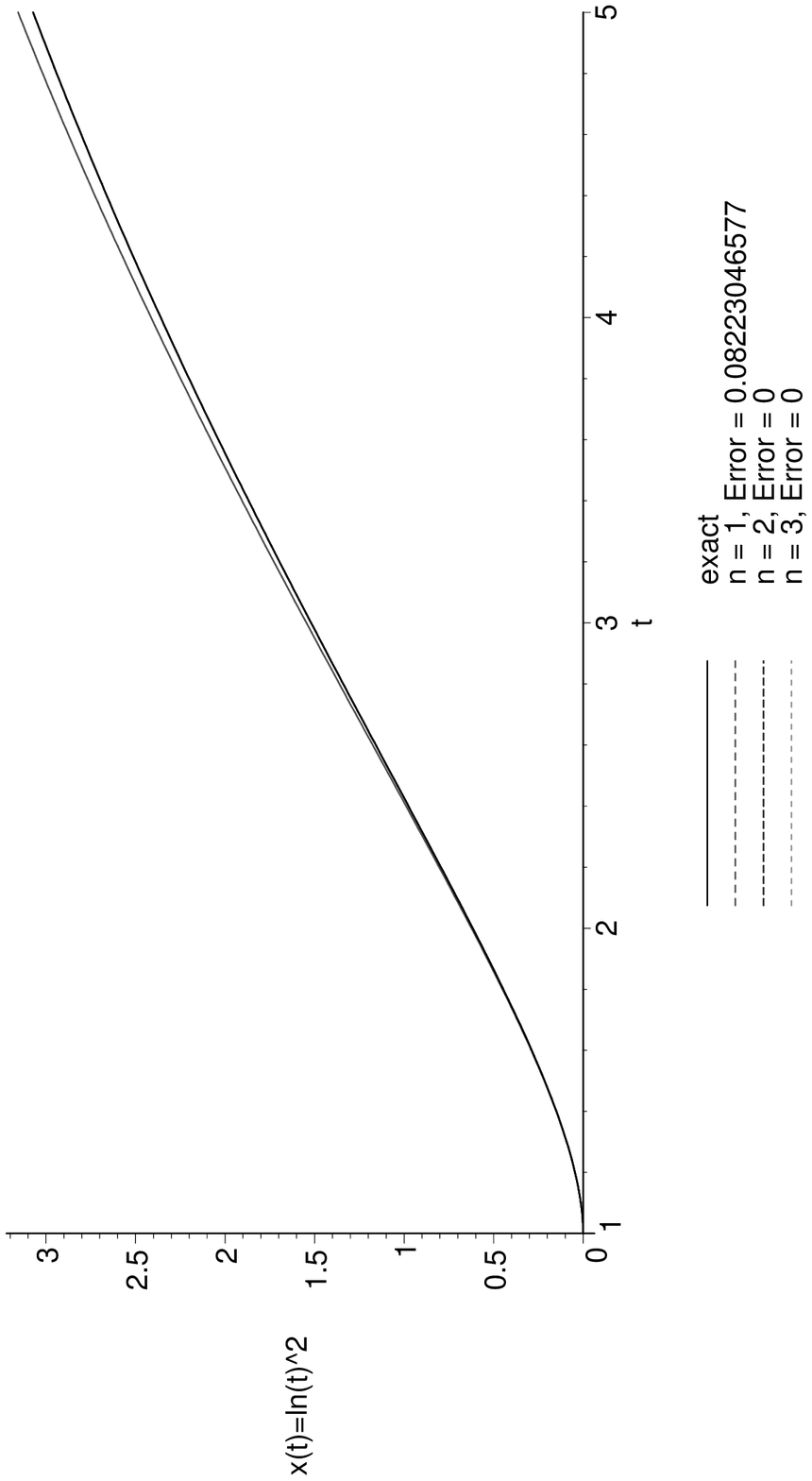}}
\end{center}
\caption{Exact and numerical approximations obtained from Theorem~\ref{MainthmHMderivative}
for the Hadamard--Marchaud derivative of variable fractional order
${_1\mathbb{D}_t^{\a}} x(t)$ with $\a=\frac{t}{10}$,
$x(t)=(\ln t)^2$, $t\in[1,5]$.}
\label{BaN3}
\end{figure}


The formulas given in this paper (see Section~\ref{sec:approx})
can be used in many different kinds of problems.
Since they only depend on integer-order derivatives, we simply replace the fractional
operators that appear in the problem with our expansion formulas. By doing so,
the problem no longer depends on fractional integrals or derivatives, and we obtain
a classical problem with dependence on integer-order derivatives only. Then we may
apply known techniques (analytical or numerical) to solve the problem and thus
obtaining an approximation of the initial problem. We exemplify the procedure
with a fractional differential equation and with a fractional variational problem, depending
on the left Hadamard--Marchaud fractional derivative of order $\DS\a=\frac{t}{10}$,
with $t\in[1,5]$. For other types of fractional operators or different problems,
the procedure is similar. In each case, the result $x$ is compared with the exact
solution $\overline x$ by computing the error $E(x,\overline x)$ with the $L^2$ norm
\eqref{eq:L2norm}.


\subsection{Fractional differential equations}
\label{sec:application}

Consider the following fractional differential equation of variable order:
\begin{equation}
\label{eq:FDE:VO}
\begin{cases}
{_1\mathbb{D}_t^{\a}} x(t)+x(t)=\frac{2}{\Gamma\left(3
-\frac{t}{10}\right)}(\ln t)^{2-\frac{t}{10}}+(\ln t)^2,
\quad t\in[1,5],\\
x(1)=0.
\end{cases}
\end{equation}
It is easy to check that $\overline{x}(t)=(\ln t)^2$ is a solution to \eqref{eq:FDE:VO}.
Since we only have one boundary condition, we must choose an approximation formula using
only the first derivative of $x$ and with size $N\geq 2$, that is,
\begin{multline}
\label{fracapprox}
{_1\mathbb{D}_t^{\a}} x(t)\approx A(0)(\ln t)^{-\a}x(t)
+A(1)(\ln t)^{1-\a}t x'(t)\\
+\sum_{k=2}^N B(k)(\ln t)^{1-k-\a}V_k(t),
\end{multline}
where
\begin{equation*}
\begin{split}
A(k)&=\frac{1}{\Gamma(k+1-\a)}\left[1+\sum_{p=2-k}^N
\frac{\Gamma(p+\a-1)}{\Gamma(\a-k)(p-1+k)!}\right],
\quad k = 0,1,\\
B(k)&=\frac{\Gamma(k+\a-1)}{\Gamma(-\a)\Gamma(1+\a)(k-1)!},
\quad k=2,\ldots,N.
\end{split}
\end{equation*}
Thus, performing this approximation, we obtain the system
\begin{equation}
\label{eq:stODE:syst}
\begin{cases}
\DS\left[A(0)(\ln t)^{-\a}+1\right]x(t)+A(1)(\ln t)^{1-\a}t x'(t)
+\sum_{k=2}^N B(k)(\ln t)^{1-k-\a}V_k(t)\\
\quad =\DS\frac{2}{\Gamma\left(3-\frac{t}{10}\right)}(\ln t)^{2-\frac{t}{10}}+(\ln t)^2,\\
\DS V'_k(t)=(k-1)(\ln t)^{k-2}\frac{x(t)}{t},  \qquad k=2,3,\ldots,N,\\
x(1)=0, \quad V_k(1)=0, \qquad k=2,3,\ldots,N.
\end{cases}
\end{equation}
Now we apply any standard numerical tool to solve \eqref{eq:stODE:syst}, which is simply
a system of $N$ first order differential equations. Such solution is an approximation
to the solution of the variable order fractional problem \eqref{eq:FDE:VO}, and the error will decrease as $N$ goes
to infinity. The result obtained using the routine \textsf{dsolve} of Maple is shown in Figure~\ref{diff}.
\begin{figure}[ht]
\begin{center}
\psfrag{x\(t\)=ln\(t\)^2}{\hspace{0.5cm} \small $x(t)$}
\psfrag{t}{\small $t$}
\includegraphics[scale=0.5, angle=-90]{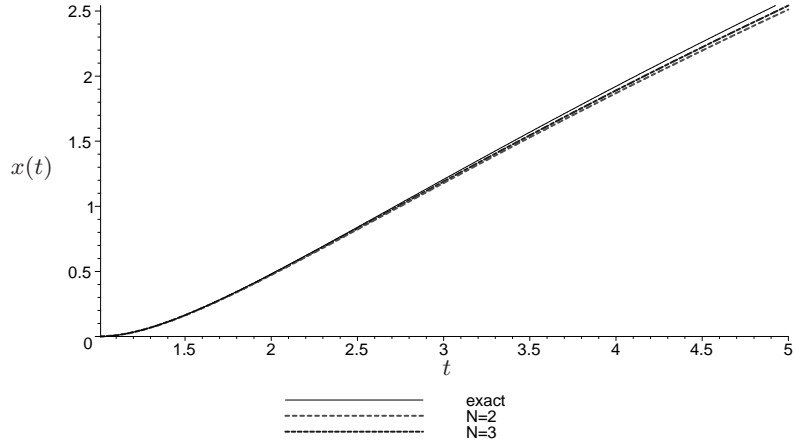}\\
\caption{Analytical and approximate solutions to the
variable fractional order problem \eqref{eq:FDE:VO}.}\label{diff}
\end{center}
\end{figure}


\subsection{Fractional problems of the calculus of variations}
\label{sec:appl:fcv}

The study of fractional problems of the calculus of variations
is a subject of strong current research and practical interest
\cite{FCV:the:book2,FCV:the:book}. For fractional variational
problems of variable order, see \cite{Od1,Od2}.
We now consider a fractional problem of the calculus of variations,
which is described in the following way: minimize the functional
\begin{equation}
\label{eq:FCV}
\mathcal{J}(x)=\int_1^{5}\left( {_1\mathbb{D}_t^{\a}} x(t)
-\frac{2}{\Gamma\left(3-\frac{t}{10}\right)}(\ln t)^{2-\frac{t}{10}}\right)^2\,dt
\end{equation}
subject to the boundary conditions
\begin{equation}
\label{eq:FCV:BC}
x(1)=0 \quad \mbox{and}\quad x(5)=(\ln 5)^2.
\end{equation}
Again, the solution to this problem is $\overline{x}(t)=(\ln t)^2$,
since functional \eqref{eq:FCV} is non-negative, it vanishes when evaluated
at $\overline x$, and the boundary conditions \eqref{eq:FCV:BC} are verified for $\overline x$.
The numerical method is described in the following way. Replacing the fractional derivative
by the approximation \eqref{fracapprox}, we get a new problem,
which is an approximation of the initial one. Define the control
$$
u(t) = A(0)(\ln t)^{-\a}x(t)+A(1)(\ln t)^{1-\a}t x'(t)
+\sum_{k=2}^N B(k)(\ln t)^{1-k-\a}V_k(t).
$$
Thus, we obtain the differential equation
\begin{equation}
\label{dynamic_const}
x'(t)=\frac{(\ln t)^{\a-1}}{A(1)t}u(t)-\frac{A(0)(\ln t)^{-1}}{A(1)t}x(t)
-\sum_{k=2}^N \frac{B(k)}{A(1)t}(\ln t)^{-k}V_k(t).
\end{equation}
Denote the right-hand side of equation \eqref{dynamic_const} by $f\left(t,x,V,u\right)$.
So, the approximated optimal control problem is described in the following way: minimize the functional
$$
\tilde{\mathcal{J}}(x,V,u)=\int_1^{5}\left( u(t)-\frac{2}{\Gamma\left(3
-\frac{t}{10}\right)}(\ln t)^{2-\frac{t}{10}}\right)^2dt,
$$
where $V=(V_2,\ldots, V_n)$, subject to the dynamic constraints
$$
\begin{cases}
\DS x'(t)=f\left(t,x,V,u\right),\\
\DS V'_k(t)=(k-1)(\ln t)^{k-2}\frac{x(t)}{t},
\qquad k=2,3,\ldots,N,
\end{cases}
$$
and the boundary conditions
$$
\begin{cases}
x(1)=0,\\
x(5)=(\ln 5)^2,\\
V_k(1)=0, \qquad k=2,3,\ldots,N.
\end{cases}
$$
To solve it, we apply the necessary optimality conditions
that every solution of this optimal control problem must satisfy.
To this end, consider the Hamiltonian function defined by
\begin{multline*}
H(t,x,V,u,\lambda)=\left( u(t) -\frac{2}{\Gamma\left(3
-\frac{t}{10}\right)}(\ln t)^{2-\frac{t}{10}}\right)^2
+\lambda_1 f\left(t,x,V,u\right)\\
+\sum_{k=2}^N \lambda_k (k-1)(\ln t)^{k-2}\frac{x(t)}{t},
\end{multline*}
where $\lambda=(\lambda_1,\lambda_2,\ldots,\lambda_N)$.
We then apply the Pontryagin maximum principle to this problem
\cite{Pontryagin:book}, which asserts that
$$
\frac{\partial H}{\partial u}=0,
\quad x'=\frac{\partial H}{\partial \lambda_1},
\quad V_k'=\frac{\partial H}{\partial \lambda_k},
\quad \lambda_1'=-\frac{\partial H}{\partial x},
\quad \lambda_k'=-\frac{\partial H}{\partial V_k}.
$$
We obtain the system of differential equations
\begin{equation}
\label{eq:CNP:SH}
\begin{cases}
\DS x'(t)= \frac{2\ln t}{\Gamma\left(3-\frac{t}{10}\right)A(1)t}
-\frac{(\ln t)^{2\a-2}}{2(A(1)t)^2}\lambda_1(t)
-\frac{A(0) (\ln t)^{-1}}{A(1)t}x(t)\\
\qquad \quad \displaystyle -\sum_{k=2}^N \frac{B(k)}{A(1)t}(\ln t)^{-k}V_k(t),\\
\DS V_k'(t)=(k-1)(\ln t)^{k-2}\frac{x(t)}{t}, \quad k=2,\ldots,N,\\
\DS\lambda_1'(t)=\frac{A(0)(\ln t)^{-1}}{A(1)t}\lambda_1
-\sum_{k=2}^N(k-1)\frac{ (\ln t)^{k-2}}{t}\lambda_k(t),\\
\DS\lambda_k'(t)=\frac{B(k)(\ln t)^{-k}}{A(1)t}\lambda_1,
\quad k=2,\ldots,N,
\end{cases}
\end{equation}
subject to the boundary conditions
\begin{equation}
\label{eq:CNP:BC}
\begin{cases}
x(1)=0,\\
x(5)=(\ln 5)^2,\\
V_k(1)=0, \qquad k=2,3,\ldots,N\\
\lambda_k(5)=0,\quad k=2,3,\ldots,N.
\end{cases}
\end{equation}
We solve this system with the command \textsf{dsolve} of Maple,
and the result is depicted in Figure~\ref{cv-maple}.
\begin{figure}[ht]
\begin{center}
\psfrag{x\(t\)=ln\(t\)^2}{\hspace{0.5cm} \small $x(t)$}
\psfrag{t}{\small $t$}
\includegraphics[scale=0.5,angle=-90]{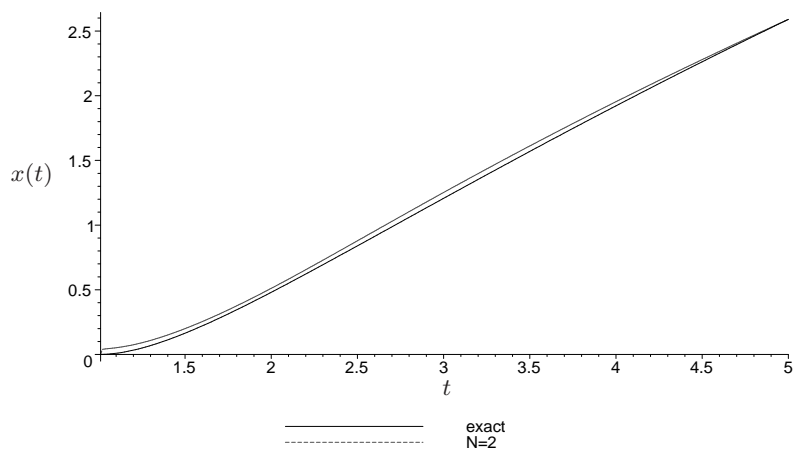}\\
\caption{Exact and approximate solution to the
variable order fractional problem of the calculus of variations
\eqref{eq:FCV}--\eqref{eq:FCV:BC}.}
\label{cv-maple}
\end{center}
\end{figure}


\section*{Acknowledgments}

Work partially supported by Portuguese funds through the
\emph{Center for Research and Development in Mathematics and Applications} (CIDMA),
and \emph{The Portuguese Foundation for Science and Technology} (FCT),
within project PEst-OE/MAT/ UI4106/2014. Torres was also supported
by EU funding under the 7th Framework Programme FP7-PEOPLE-2010-ITN,
grant agreement 264735-SADCO; and by the FCT project PTDC/EEI-AUT/1450/2012,
co-financed by FEDER under POFC-QREN with COMPETE reference FCOMP-01-0124-FEDER-028894.
The authors are grateful to two referees for their valuable
comments and helpful suggestions.



\end{document}